\newtheorem{theorem}{Theorem}[section]
\newtheorem{corollary}{Corollary}[section]
\newtheorem{claim}{Claim}[section]
\begin{document}
\thispagestyle{empty}
\begin{center}
{\bf \Large Mappings preserving a family of sum of triple products on $\ast $-algebras}\\
\vspace{.2in}
{\bf Jo\~{a}o Carlos da Motta Ferreira\\ and \\Maria das Gra\c{c}as Bruno Marietto}\\
\vspace{.2in}
{\it Center for Mathematics, Computing and Cognition,\\
Federal University of ABC,\\
Avenida dos Estados, 5001, \\
09210-580, Santo Andr\'e, Brazil.}\\
e-mail: joao.cmferreira@ufabc.edu.br, graca.marietto@ufabc.edu.br\\
\end{center}

\begin{abstract} Let $\mathcal{A}$ and $\mathcal{B}$ be two unital complex $\ast $-algebras such that $\mathcal{A}$ has a nontrivial projection.  In this paper, we study the structure of bijective mappings $\Phi :\mathcal{A}\rightarrow \mathcal{B}$ preserving sum of triple products $\alpha _{1} ab^{*}c+\alpha _{2} acb^{*}+\alpha _{3} b^{*}ac +\alpha _{4} cab^{*}+\alpha _{5} b^{*}ca+\alpha _{6} cb^{*}a,$ where the scalars $\{\alpha _{k}\}_{k=1}^{6}$ are complex numbers satisfying some conditions. Applications of obtained results are given.
\end{abstract}
{\it \bf 2010 MSC:} 46L10, 47B48\\
{\it \bf Keywords:} Preserving problems, prime algebras, $\ast $-algebras, $\ast $-ring isomorphisms, $\ast $-ring anti-isomorphisms

\section{Introduction}
Let $\mathcal{A}$ and $\mathcal{B}$ be two complex $\ast $-algebras. For $a,b\in \mathcal{A}$ (resp., $a,b\in \mathcal{B}$) denote by $a\filledsquare _{\eta} b=a^{*}b+\eta ba^{*}$ and $a\circ _{\eta} b=ab+\nu ba,$ where $\eta ,\nu $ are nonzero complex numbers. We say that a mapping $\Phi:\mathcal{A}\rightarrow \mathcal{B}$ {\it preserves triple product $a\filledsquare _{\eta }b\filledsquare _{\nu }c$} (resp., {\it preserves mixed product $a\filledsquare _{\eta }b\circ _{\nu }c$}), where $a\filledsquare _{\eta }b\filledsquare _{\nu }c=(a\filledsquare _{\eta }b)\filledsquare _{\nu }c$ (resp., $a\filledsquare _{\eta }b\circ _{\nu }c=(a\filledsquare _{\eta }b)\circ _{\nu }c$), if 
{\allowdisplaybreaks\begin{align*}\allowdisplaybreaks
&\Phi (a\filledsquare _{\eta }b\filledsquare _{\nu }c)=\Phi (a)\filledsquare _{\eta }\Phi (b)\filledsquare _{\nu }\Phi (c)\nonumber \\
& \hspace{2.0cm} (\textrm{resp.,} \,\, \Phi (a\filledsquare _{\eta }b\circ _{\nu }c)=\Phi (a)\filledsquare _{\eta }\Phi (b)\circ _{\nu }\Phi (c)),
\end{align*}}
for all elements $a,b,c\in \mathcal{A}.$

Let $\mathcal{A}$ and $\mathcal{B}$ be two complex $\ast $-algebras, $\{\alpha _{k}\}_{k=1}^{6}$ complex numbers and $\Phi :\mathcal{A}\rightarrow \mathcal{B}$ a mapping. We say that $\Phi $ {\it preserves sum of triple products $\alpha _{1} ab^{*}c+\alpha _{2} acb^{*}+\alpha _{3} b^{*}ac +\alpha _{4} cab^{*}+\alpha _{5} b^{*}ca+\alpha _{6} cb^{*}a$} if
{\allowdisplaybreaks\begin{align}\allowdisplaybreaks\label{fundident}
&\Phi (\alpha _{1} ab^{*}c+\alpha _{2} acb^{*}+\alpha _{3} b^{*}ac +\alpha _{4} cab^{*}+\alpha _{5} b^{*}ca+\alpha _{6} cb^{*}a)\nonumber \\
&=\alpha _{1}\Phi (a)\Phi (b)^{*}\Phi (c)+\alpha _{2}\Phi (a)\Phi (c)\Phi (b)^{*}+\alpha _{3}\Phi (b)^{*}\Phi (a)\Phi (c)\nonumber \\
&+\alpha _{4}\Phi (c)\Phi (a)\Phi (b)^{*}+\alpha _{5}\Phi (b)^{*}\Phi (c)\Phi (a)+\alpha _{6}\Phi (c)\Phi (b)^{*}\Phi (a),
\end{align}}
for all elements $a,b,c\in \mathcal{A}.$ 

In recent years, there has been considerable interest in the study of mappings preserving different types of products, triple products or mixed products on $\ast $-algebras (for example, see the works \cite{Darvish}, \cite{Liu}, \cite{Taghavi}, \cite{Zhao1}, \cite{Zhao2} and the references therein). In particular, Darvish et al. \cite{Darvish} studied the structure of the mappings preserving product $a\filledsquare _{\eta} b$ on $C^{*}$-algebras, Liu and Ji \cite{Liu} studied the structure of the mappings preserving product $a\filledsquare _{1} b$ on factor von Neumann algebras and Taghavi et al. \cite{Taghavi} studied the structure of the mappings preserving triple product $a\filledsquare _{1} b\filledsquare _{1} c$ on $\ast $-algebras. Note that mappings preserving triple product $a\filledsquare _{1} b\filledsquare _{1} c$ satisfy (\ref{fundident}), for convenient scalars $\alpha _{k}$ $(k=1,2,\cdots ,6).$ Based on these facts, in this paper, we study the mappings that preserves sum of triple products $\alpha _{1} ab^{*}c+\alpha _{2} acb^{*}+\alpha _{3} b^{*}ac +\alpha _{4} cab^{*}+\alpha _{5} b^{*}ca+\alpha _{6} cb^{*}a$ on $\ast $-algebras. Applications of obtained results are given to mappings preserving triple product $a\filledsquare _{\eta }b\filledsquare _{\nu }c$ and preserving mixed product $a\filledsquare _{\eta }b\circ _{\nu }c.$

\section{The statement of the main results}
Two main results are given in this paper. The first read as follows.

\begin{theorem}\label{thm21} Let $\{\alpha _{k}\}_{k=1}^{6}$ be complex numbers satisfying the condition $\sum _{k=1}^{6} \alpha _{k} \neq 0,$ $\mathcal{A}$ and $\mathcal{B}$ two unital complex $\ast $-algebras  with $1_{\mathcal{A}}$ and $1_{\mathcal{B}}$ their multiplicative identities, respectively, and such that $\mathcal{A}$ is prime and has a nontrivial projection. Then every bijective mapping $\Phi :\mathcal{A}\rightarrow \mathcal{B}$ preserving sum of triple products $\alpha _{1} ab^{*}c+\alpha _{2} acb^{*}+\alpha _{3} b^{*}ac +\alpha _{4} cab^{*}+\alpha _{5} b^{*}ca+\alpha _{6} cb^{*}a$ is additive. In addition, (i) if $\Phi (1_{\mathcal{A}})$ is a projection, then $\Phi $ is a $\ast $-Jordan ring isomorphism and (ii) if $\mathcal{B}$ is prime and $\phi (1_{\mathcal{A}})$ is a projection of $\mathcal{B},$ then $\Phi $ is either a $\ast $-ring isomorphism or an $\ast $-ring anti-isomorphism.
\end{theorem}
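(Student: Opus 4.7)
The plan is to follow the classical Peirce decomposition strategy that pervades the preserver literature. Let $p_1 \in \mathcal{A}$ be a nontrivial projection, set $p_2 = 1_{\mathcal{A}} - p_1$, and decompose $\mathcal{A} = \mathcal{A}_{11} \oplus \mathcal{A}_{12} \oplus \mathcal{A}_{21} \oplus \mathcal{A}_{22}$ with $\mathcal{A}_{ij} = p_i \mathcal{A} p_j$; note $\mathcal{A}_{ij}^{\ast} = \mathcal{A}_{ji}$. The proof will proceed in four stages: (a) initialization and identification of how $\Phi$ interacts with this decomposition, (b) full additivity of $\Phi$, (c) extraction of the $\ast$-Jordan multiplicativity, and (d) splitting into an isomorphism or anti-isomorphism under the extra primeness hypothesis on $\mathcal{B}$.

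In stage (a) I first show $\Phi(0) = 0$ by substituting $a = b = c = 0$ in (\ref{fundident}); together with further substitutions of the form $(1_{\mathcal{A}}, 0, 0)$ and the hypothesis $\sum_{k=1}^{6}\alpha_k \neq 0$, the resulting polynomial identity in $\Phi(0)$ forces the desired normalization. Then, by selecting triples such as $(a, p_i, p_j)$ and $(p_i, p_j, a)$ with $a \in \mathcal{A}_{k\ell}$, one reads off that the image $\Phi(\mathcal{A}_{ij})$ lies in the natural Peirce subspace $\mathcal{B}_{ij}$ attached to $q := \Phi(p_1)$. When $\Phi(1_{\mathcal{A}})$ is assumed a projection, $q$ is genuinely a projection and the bookkeeping simplifies considerably; otherwise one works with the idempotent $q$ and its $\ast$-conjugate.

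Stage (b) is the heart of the argument and is the step I expect to be the main obstacle. For $a, b \in \mathcal{A}$, bijectivity gives a unique $s \in \mathcal{A}$ with $\Phi(s) = \Phi(a) + \Phi(b)$. I then test (\ref{fundident}) at carefully chosen triples involving $s$, $p_1$, $p_2$, and generic elements of the $\mathcal{A}_{ij}$, tuned so that the asymmetric six-term sum collapses onto a single Peirce component. Comparing with the values of (\ref{fundident}) at the corresponding triples with $a$ and then with $b$, and using the primeness of $\mathcal{A}$ to annihilate residual terms, one establishes additivity successively on each $\mathcal{A}_{ij}$, on sums $\mathcal{A}_{ij} + \mathcal{A}_{k\ell}$, and finally on all of $\mathcal{A}$. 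The hypothesis $\sum_k \alpha_k \neq 0$ is used repeatedly whenever one reduces via the multiplicative identity; the delicate point is to choose substitutions that do not accidentally cancel the $\alpha_k$-weighted combinations, so the six terms must be tracked individually rather than treated as a single symbolic lump.

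With additivity in hand, stage (c) follows by substituting specific arguments such as $b = 1_{\mathcal{A}}$ or $c = 1_{\mathcal{A}}$ in (\ref{fundident}) and exploiting $\Phi(1_{\mathcal{A}})$ being a projection to recover the identities $\Phi(x^{\ast}) = \Phi(x)^{\ast}$ and $\Phi(xy + yx) = \Phi(x)\Phi(y) + \Phi(y)\Phi(x)$ for all $x, y \in \mathcal{A}$, which is precisely the $\ast$-Jordan ring isomorphism of (i). For (ii), once $\mathcal{B}$ is also prime, a standard result on $\ast$-Jordan ring isomorphisms between prime $\ast$-rings (of Herstein/Martindale type) applies: every such map is either a ring isomorphism or a ring anti-isomorphism, and primeness of $\mathcal{B}$ rules out nontrivial mixtures. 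This completes the proof.
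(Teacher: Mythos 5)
Your overall skeleton does match the paper's: Peirce decomposition of $\mathcal{A}$, the surjectivity trick (pick $s$ with $\Phi(s)=\Phi(a)+\Phi(b)$ and show $s=a+b$ by testing the identity), component-by-component additivity, then $b=1_{\mathcal{A}}$ or $c=1_{\mathcal{A}}$ substitutions to get the $\ast$-Jordan structure, and Herstein's theorem for part (ii). But stage (b) has a genuine gap. Every test of (\ref{fundident}) against triples like $(f,1_{\mathcal{A}},p_j)$ or $(f,1_{\mathcal{A}},t_{ij})$ produces equations whose Peirce components carry \emph{partial} sums of the $\alpha_k$ as coefficients --- typically $\alpha_1+\alpha_2+\alpha_3$, $\alpha_4+\alpha_5+\alpha_6$, $\alpha_1+\alpha_2$, $\alpha_5+\alpha_6$, and so on. The hypothesis only guarantees $\sum_{k=1}^{6}\alpha_k\neq 0$; any of these partial sums may vanish, and no choice of test triple avoids them, because which of the six words $ab^{*}c,\dots,cb^{*}a$ lands in which Peirce corner is forced. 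So ``choosing substitutions that do not accidentally cancel the $\alpha_k$-weighted combinations,'' as you propose, cannot be done in general. The paper's resolution is a specific device your plan lacks: evaluating the six-term expression at the reversed triple $(c,b,a)$ reproduces the same expression with coefficients permuted to $(\alpha_6,\alpha_4,\alpha_5,\alpha_2,\alpha_3,\alpha_1)$, so $\Phi$ automatically preserves the sum of triple products for that permuted coefficient system too (Claim \ref{c21}). Running each computation twice, once per coefficient system, and adding the two resulting equations always yields the full sum $\sum_{k=1}^{6}\alpha_k$ as the coefficient of each component, which is nonzero by hypothesis. This is the one non-routine idea in the additivity argument, and without it (or a case split such as the one in Claim \ref{c26}) your stage (b) does not close.

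A secondary issue: in stage (a) you assert that $\Phi(\mathcal{A}_{ij})$ lands in a Peirce subspace of $\mathcal{B}$ attached to $q=\Phi(p_1)$. Before additivity is established there is no reason $q$ is even an idempotent, and nothing in the argument needs this claim; the paper's additivity proof is carried out entirely on the $\mathcal{A}$ side by pulling $\Phi(a)+\Phi(b)$ back to an element $f$ and computing products in $\mathcal{A}$. Drop it. Stages (c) and (d) are essentially right, with the caveat that the Jordan identity again comes from adding the $b=1_{\mathcal{A}}$ identity to its $a\leftrightarrow c$ reversal and then invoking $\Phi((\sum_{k}\alpha_k)a)=(\sum_{k}\alpha_k)\Phi(a)$, which itself requires first proving $\Phi(1_{\mathcal{A}})=1_{\mathcal{B}}$.
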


We organize the proof of Theorem \ref{thm21} in a series of Claims. The following three Claims will be used throughout this paper whose proofs are simple and therefore omitted here.

\begin{claim}\label{c21} If $\Phi $ preserves sum of triple products $\alpha _{1} ab^{*}c+\alpha _{2} acb^{*}+\alpha _{3} b^{*}ac +\alpha _{4} cab^{*}+\alpha _{5} b^{*}ca+\alpha _{6} cb^{*}a,$ then it also preserves sum of triple products $\alpha _{6} ab^{*}c+\alpha _{4} acb^{*}+\alpha _{5} b^{*}ac +\alpha _{2} cab^{*}+\alpha _{3} b^{*}ca+\alpha _{1} cb^{*}a.$
\end{claim}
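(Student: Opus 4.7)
The plan is to obtain the new identity by the substitution $a \leftrightarrow c$ in the hypothesis (\ref{fundident}). Concretely, since $\Phi$ preserves the original sum of triple products, the defining identity (\ref{fundident}) holds for \emph{all} triples in $\mathcal{A}$; in particular it holds with the roles of $a$ and $c$ interchanged, so one may write
\[
\Phi\bigl(\alpha_{1} cb^{*}a+\alpha_{2} cab^{*}+\alpha_{3} b^{*}ca+\alpha_{4} acb^{*}+\alpha_{5} b^{*}ac+\alpha_{6} ab^{*}c\bigr)
\]
in terms of the corresponding sum on the $\mathcal{B}$-side with $\Phi(a)$ and $\Phi(c)$ interchanged.

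The key observation is then purely combinatorial: regrouping the six monomials in the argument of $\Phi$ according to the fixed ordered list $(ab^{*}c,\,acb^{*},\,b^{*}ac,\,cab^{*},\,b^{*}ca,\,cb^{*}a)$ yields precisely the coefficients $(\alpha_{6},\alpha_{4},\alpha_{5},\alpha_{2},\alpha_{3},\alpha_{1})$. The identical regrouping of the image side produces exactly
\[
\alpha_{6}\Phi(a)\Phi(b)^{*}\Phi(c)+\alpha_{4}\Phi(a)\Phi(c)\Phi(b)^{*}+\alpha_{5}\Phi(b)^{*}\Phi(a)\Phi(c)
\]
\[
+\alpha_{2}\Phi(c)\Phi(a)\Phi(b)^{*}+\alpha_{3}\Phi(b)^{*}\Phi(c)\Phi(a)+\alpha_{1}\Phi(c)\Phi(b)^{*}\Phi(a),
\]
which is the right-hand side of the preserving identity attached to the new coefficient vector $(\alpha_{6},\alpha_{4},\alpha_{5},\alpha_{2},\alpha_{3},\alpha_{1})$.

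There is no genuine obstacle here: the entire content is that the permutation of arguments $a\leftrightarrow c$ induces the permutation $(1\,6)(2\,4)(3\,5)$ on the six indexed monomials, and this permutation takes the coefficient vector $(\alpha_{1},\dots,\alpha_{6})$ to $(\alpha_{6},\alpha_{4},\alpha_{5},\alpha_{2},\alpha_{3},\alpha_{1})$. Thus the claim follows immediately from (\ref{fundident}), which is precisely why the authors flag its proof as simple and omit the computation.
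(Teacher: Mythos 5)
Your proof is correct: substituting $a\leftrightarrow c$ in (\ref{fundident}) and regrouping the six monomials by the fixed ordered list does induce the permutation $(1\,6)(2\,4)(3\,5)$ on the coefficients, yielding exactly the preserving identity for $(\alpha_{6},\alpha_{4},\alpha_{5},\alpha_{2},\alpha_{3},\alpha_{1})$. The paper omits this proof as ``simple,'' and your argument is precisely the intended one.
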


\begin{claim}\label{c22} Let $a,b,c\in \mathcal{A}$ such that $\Phi (c)=\Phi (a)+\Phi (b)$. Then the following hold:
{\allowdisplaybreaks\begin{align*}\allowdisplaybreaks
(i)&\> \Phi (\alpha _{1} cs^{*}t+\alpha _{2} cts^{*}+\alpha _{3} s^{*}ct +\alpha _{4} tcs^{*}+\alpha _{5} s^{*}tc+\alpha _{6} ts^{*}c)\\
&=\Phi (\alpha _{1} as^{*}t+\alpha _{2} ats^{*}+\alpha _{3} s^{*}at +\alpha _{4} tas^{*}+\alpha _{5} s^{*}ta+\alpha _{6} ts^{*}a)\\
&+\Phi (\alpha _{1} bs^{*}t+\alpha _{2} bts^{*}+\alpha _{3} s^{*}bt +\alpha _{4} tbs^{*}+\alpha _{5} s^{*}tb+\alpha _{6} ts^{*}b),\\
(ii)&\> \Phi (\alpha _{1} st^{*}c+\alpha _{2} sct^{*}+\alpha _{3} t^{*}sc +\alpha _{4} cst^{*}+\alpha _{5} t^{*}cs+\alpha _{6} ct^{*}s)\\
&=\Phi (\alpha _{1} st^{*}a+\alpha _{2} sat^{*}+\alpha _{3} t^{*}sa +\alpha _{4} ast^{*}+\alpha _{5} t^{*}as+\alpha _{6} at^{*}s)\\
&+\Phi (\alpha _{1} st^{*}b+\alpha _{2} sbt^{*}+\alpha _{3} t^{*}sb +\alpha _{4} bst^{*}+\alpha _{5} t^{*}bs+\alpha _{6} bt^{*}s),
\end{align*}}
for all elements $s,t\in \mathcal{A}.$
\end{claim}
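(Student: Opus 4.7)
The plan is to apply the defining identity (\ref{fundident}) twice: once to rewrite the outer $\Phi$ on the left-hand side of each equality in terms of a cubic polynomial in $\Phi(c)$, $\Phi(s)$, $\Phi(t)$ in $\mathcal{B}$; then, after using the hypothesis $\Phi(c)=\Phi(a)+\Phi(b)$ to split, apply (\ref{fundident}) again in reverse to each of the two resulting pieces. Since products in $\mathcal{B}$ are bilinear, this will yield the required decomposition.

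In detail, for part (i), I would recognize that the expression
$\alpha_1 cs^*t+\alpha_2 cts^*+\alpha_3 s^*ct+\alpha_4 tcs^*+\alpha_5 s^*tc+\alpha_6 ts^*c$
is obtained from the template $\alpha_1 xy^*z+\alpha_2 xzy^*+\alpha_3 y^*xz+\alpha_4 zxy^*+\alpha_5 y^*zx+\alpha_6 zy^*x$ in (\ref{fundident}) by the substitution $(x,y,z)=(c,s,t)$. Hence by (\ref{fundident}),
\begin{align*}
&\Phi(\alpha_1 cs^*t+\alpha_2 cts^*+\alpha_3 s^*ct+\alpha_4 tcs^*+\alpha_5 s^*tc+\alpha_6 ts^*c)\\
&=\alpha_1\Phi(c)\Phi(s)^*\Phi(t)+\alpha_2\Phi(c)\Phi(t)\Phi(s)^*+\alpha_3\Phi(s)^*\Phi(c)\Phi(t)\\
&\quad+\alpha_4\Phi(t)\Phi(c)\Phi(s)^*+\alpha_5\Phi(s)^*\Phi(t)\Phi(c)+\alpha_6\Phi(t)\Phi(s)^*\Phi(c).
\end{align*}
Substituting $\Phi(c)=\Phi(a)+\Phi(b)$ and using distributivity in $\mathcal{B}$, this right-hand side splits cleanly into two analogous sums, one in which $\Phi(c)$ is replaced by $\Phi(a)$ and one in which it is replaced by $\Phi(b)$. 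Applying (\ref{fundident}) once more, this time in the reverse direction and with $(x,y,z)=(a,s,t)$ and $(x,y,z)=(b,s,t)$ respectively, produces precisely the two $\Phi$-values on the right-hand side of (i).

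Part (ii) is handled identically, only with a different placement of $c$ in the template. Here the sum $\alpha_1 st^*c+\alpha_2 sct^*+\alpha_3 t^*sc+\alpha_4 cst^*+\alpha_5 t^*cs+\alpha_6 ct^*s$ corresponds to the substitution $(x,y,z)=(s,t,c)$ in (\ref{fundident}), so applying the preserving property expresses its $\Phi$-image as the same alternating polynomial in $\Phi(s)$, $\Phi(t)^*$, $\Phi(c)$. Splitting the occurrences of $\Phi(c)$ via $\Phi(c)=\Phi(a)+\Phi(b)$ and re-applying (\ref{fundident}) backwards with $(x,y,z)=(s,t,a)$ and $(s,t,b)$ yields (ii).

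There is no real obstacle here: the proof is a bookkeeping exercise whose only content is that the cubic expression on the $\mathcal{B}$-side of (\ref{fundident}) is linear in each of $\Phi(a)$, $\Phi(b)^*$, $\Phi(c)$ separately, so whenever one of the three $\Phi$-values is itself a sum the whole expression distributes. This is exactly why the authors describe these identities as having a proof simple enough to omit.
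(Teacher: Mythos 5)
Your proof is correct and is precisely the ``simple'' argument the authors had in mind when they omitted the proof: apply (\ref{fundident}) with $c$ in an unstarred slot, use distributivity in $\mathcal{B}$ to split $\Phi(c)=\Phi(a)+\Phi(b)$, and apply (\ref{fundident}) in reverse to each piece. Nothing is missing.
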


\begin{claim}\label{c23} $\Phi (0)=0.$
\end{claim}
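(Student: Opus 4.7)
The plan is to exploit bijectivity together with the structural fact that the variable $b^{*}$ appears exactly once in each of the six monomials $ab^{*}c,\ acb^{*},\ b^{*}ac,\ cab^{*},\ b^{*}ca,\ cb^{*}a$. Because $\Phi$ is bijective, there is a (unique) element $a_{0}\in \mathcal{A}$ with $\Phi(a_{0})=0$; the goal is to show $a_{0}=0$, equivalently $\Phi(0)=0$.

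The concrete step is to evaluate the preserving identity (\ref{fundident}) at $a=0$, $b=a_{0}$, $c=0$. On the left-hand side, every one of the six monomials inside $\Phi$ contains the factor $a=0$ (or $c=0$), so the argument of $\Phi$ collapses to $0$ and the left-hand side equals $\Phi(0)$. On the right-hand side, by direct inspection each of the six terms contains the factor $\Phi(b)^{*}=\Phi(a_{0})^{*}=0^{*}=0$, so the right-hand side vanishes. Comparing the two sides yields $\Phi(0)=0$.

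There is no serious obstacle here: the only thing one has to check carefully is the combinatorial observation that $\Phi(b)^{*}$ really does appear as a factor in every one of the six summands on the right of (\ref{fundident}), which is immediate from the listed expressions $\alpha_{1}\Phi(a)\Phi(b)^{*}\Phi(c)$, $\alpha_{2}\Phi(a)\Phi(c)\Phi(b)^{*}$, $\alpha_{3}\Phi(b)^{*}\Phi(a)\Phi(c)$, $\alpha_{4}\Phi(c)\Phi(a)\Phi(b)^{*}$, $\alpha_{5}\Phi(b)^{*}\Phi(c)\Phi(a)$, $\alpha_{6}\Phi(c)\Phi(b)^{*}\Phi(a)$. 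Note that this argument does not require the hypothesis $\sum_{k=1}^{6}\alpha_{k}\neq 0$ nor any of the algebraic structure on $\mathcal{A}$ or $\mathcal{B}$ beyond the existence of $\Phi^{-1}(0)$, which is why the authors describe the proof as simple and omit it.
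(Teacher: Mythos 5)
Your proof is correct: taking $a=c=0$ and $b=\Phi^{-1}(0)$ in (\ref{fundident}) makes the argument of $\Phi$ on the left equal to $0$ and annihilates every term on the right through the factor $\Phi(b)^{*}=0^{*}=0$, giving $\Phi(0)=0$. The paper omits its proof of this claim as ``simple,'' and your argument is exactly the standard one intended, using only surjectivity and the fact that $\Phi(b)^{*}$ occurs in each of the six summands.
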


The following well known result will be used throughout this paper: Let $p_{1}$ be an arbitrary nontrivial projection of $\mathcal{A}$ and write $p_{2}=1_{\mathcal{A}}-p_{1}.$ Then $\mathcal{A}$ has a Peirce decomposition $\mathcal{A}=\mathcal{A}_{11}\oplus \mathcal{A}_{12}\oplus \mathcal{A}_{21}\oplus \mathcal{A}_{22},$ where $\mathcal{A}_{ij}=p_{i}\mathcal{A}p_{j}$ $(i,j=1,2) ,$ satisfying the following multiplicative relations: $\mathcal{A}_{ij}\mathcal{A}_{kl}\subseteq \delta _{jk} \mathcal{A}_{il},$ where $\delta _{jk}$ is the {\it Kronecker delta function}.

\begin{claim}\label{c24} For arbitrary elements $a_{ii}\in \mathcal{A}_{ii},$ $b_{ij}\in \mathcal{A}_{ij}$ and $c_{ji}\in \mathcal{A}_{ji}$ $(i\neq j;i,j=1,2)$ the following hold: (i) $\Phi (a_{ii}+b_{ij})=\Phi (a_{ii})+\Phi (b_{ij})$ and (ii) $\Phi (a_{ii}+c_{ji})=\Phi (a_{ii})+\Phi (c_{ji}).$
\end{claim}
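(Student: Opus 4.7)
The plan is to use surjectivity of $\Phi$ to choose a preimage $t\in\mathcal{A}$ with $\Phi(t)=\Phi(a_{ii})+\Phi(b_{ij})$; by injectivity it then suffices to show $t=a_{ii}+b_{ij}$. Decompose $t=t_{11}+t_{12}+t_{21}+t_{22}$ in the Peirce decomposition and aim to identify each component. I treat part (i); part (ii) is strictly analogous, with the roles of $\mathcal{A}_{ij}$ and $\mathcal{A}_{ji}$ (and of Claim \ref{c22}(i) and Claim \ref{c22}(ii)) interchanged.

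The mechanism is as follows. Claim \ref{c22}(i), applied with $c=t$, $a=a_{ii}$, $b=b_{ij}$, says that for every $s,r\in\mathcal{A}$ one has $\Phi(E_{(i)}(t,s,r))=\Phi(E_{(i)}(a_{ii},s,r))+\Phi(E_{(i)}(b_{ij},s,r))$, where $E_{(i)}(c,s,r)$ denotes the triple-product sum appearing on the left of Claim \ref{c22}(i); the same holds for Claim \ref{c22}(ii). Whenever the choice of $(s,r)$ forces $E(a_{ii},s,r)=0$, the right-hand side collapses to a single $\Phi$-image (using Claim \ref{c23}), and injectivity yields an honest element-level identity in $\mathcal{A}$. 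Running $(s,r)\in\{(p_j,p_j),(p_i,p_j),(p_j,p_i)\}$ in both parts of Claim \ref{c22} kills $E(a_{ii})$ in every case; expanding the six resulting identities into Peirce summands gives $Tt_{jj}=0$, together with the singleton constraints $\alpha_k(t_{ij}-b_{ij})=0$ and $\alpha_kt_{ji}=0$ for each $k\in\{2,3,4,5\}$, and the pair constraints $(\alpha_1+\alpha_2)(t_{ij}-b_{ij})=0$, $(\alpha_4+\alpha_6)(t_{ij}-b_{ij})=0$, $(\alpha_5+\alpha_6)t_{ji}=0$, $(\alpha_1+\alpha_3)t_{ji}=0$. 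Subtracting the singletons $\alpha_2,\alpha_3,\alpha_4,\alpha_5$ from the relevant pair constraints isolates $\alpha_1$ and $\alpha_6$ individually, so summing over $k=1,\dots,6$ produces $T(t_{ij}-b_{ij})=Tt_{ji}=Tt_{jj}=0$; since $T\neq 0$ this forces $t=t_{ii}+b_{ij}$.

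The main obstacle is pinning down $t_{ii}$, because no projection-only choice of $(s,r)$ leaves an $a_{ii}$-term alone on the right without leaving a $b_{ij}$-term as well. The remedy is to let one test variable range over an off-diagonal Peirce component. With $s=p_i$ and $r\in\mathcal{A}_{ij}$ in Claim \ref{c22}(i), a direct computation using $t=t_{ii}+b_{ij}$ shows $E_{(i)}(b_{ij},p_i,r)=0$, $E_{(i)}(t,p_i,r)=(\alpha_1+\alpha_3)t_{ii}r$, and $E_{(i)}(a_{ii},p_i,r)=(\alpha_1+\alpha_3)a_{ii}r$, so injectivity gives $(\alpha_1+\alpha_3)(t_{ii}-a_{ii})r=0$ for every $r\in\mathcal{A}_{ij}$. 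Applying the identical argument to the permuted sum of triple products supplied by Claim \ref{c21} yields $(\alpha_5+\alpha_6)(t_{ii}-a_{ii})r=0$, and running Claim \ref{c22}(ii) with $s=p_i$ and $r\in\mathcal{A}_{ji}$ (so that $r^*\in\mathcal{A}_{ij}$) yields $(\alpha_2+\alpha_4)(t_{ii}-a_{ii})r^*=0$. The three coefficient pairs sum exactly to $T$, hence $T(t_{ii}-a_{ii})v=0$ for every $v\in\mathcal{A}_{ij}$; equivalently, $(t_{ii}-a_{ii})\mathcal{A}p_j=0$. Primeness of $\mathcal{A}$, together with $p_j\neq 0$, forces $t_{ii}=a_{ii}$, which completes part (i).
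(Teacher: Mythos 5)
Your proposal is correct. Every computation checks out: the six projection tests $(s,t)\in\{(p_j,p_j),(p_i,p_j),(p_j,p_i)\}$ run through both parts of Claim \ref{c22} do produce exactly the singleton constraints for $k\in\{2,3,4,5\}$ and the pair constraints $(\alpha_1+\alpha_2)$, $(\alpha_4+\alpha_6)$, $(\alpha_5+\alpha_6)$, $(\alpha_1+\alpha_3)$ that you list (the Peirce decomposition being a direct sum lets you separate components within each single identity), and subtracting singletons from pairs does isolate $\alpha_1$ and $\alpha_6$ so that the full sum $T=\sum_k\alpha_k\neq 0$ kills $t_{ij}-b_{ij}$, $t_{ji}$ and $t_{jj}$. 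The diagonal step is also sound: $(\alpha_1+\alpha_3)+(\alpha_5+\alpha_6)+(\alpha_2+\alpha_4)=T$, and $(t_{ii}-a_{ii})\mathcal{A}p_j=0$ with $p_j\neq 0$ forces $t_{ii}=a_{ii}$ by primeness.

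The paper follows the same skeleton (preimage, Claim \ref{c22} against test elements, Peirce comparison, Claim \ref{c21} to complete partial coefficient sums to $T$, primeness at the end) but executes it more economically by taking $1_{\mathcal{A}}$ as one of the test elements. With $(s,t)=(1_{\mathcal{A}},p_j)$ a single identity already carries the coefficients $\alpha_1+\alpha_2+\alpha_3$, $\alpha_4+\alpha_5+\alpha_6$ and $T$ on $f_{ij}$, $f_{ji}$, $f_{jj}$ respectively, so one application of Claim \ref{c21} and one addition finish the off-diagonal components; the diagonal component is then handled by testing against $(1_{\mathcal{A}},t_{ij})$ for arbitrary $t_{ij}\in\mathcal{A}_{ij}$, again splitting $T$ only two ways. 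Your projections-only route costs six tests and a more delicate recombination of singleton and pair constraints, and your diagonal step needs a three-way split of $T$ drawing on both parts of Claim \ref{c22} and the involution $(\mathcal{A}_{ji})^{*}=\mathcal{A}_{ij}$; what it buys is independence from $1_{\mathcal{A}}$ as a test element in the first stage and a more explicit accounting of exactly which coefficient combinations are forced to annihilate each Peirce component. Both arguments are valid under the sole hypothesis $T\neq 0$.
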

\begin{proof} By assumption on  $\Phi ,$ there exists $f=f_{ii}+f_{ij}+f_{ji}+f_{jj}\in \mathcal{A}$ $(i\neq j;i,j=1,2)$ such that $\Phi (f)=\Phi (a_{ii}) + \Phi (b_{ij})$. By Claims \ref{c22}(i) and \ref{c23}, we have 
{\allowdisplaybreaks\begin{align*}\allowdisplaybreaks
&\Phi (\alpha _{1} f1_{\mathcal{A}}^{*}p_{j}+\alpha _{2} fp_{j}1_{\mathcal{A}}^{*}+\alpha _{3} 1_{\mathcal{A}}^{*}fp_{j} +\alpha _{4} p_{j}f1_{\mathcal{A}}^{*}+\alpha _{5} 1_{\mathcal{A}}^{*}p_{j}f+\alpha _{6} p_{j}1_{\mathcal{A}}^{*}f)\\
&=\Phi (\alpha _{1} a_{ii}1_{\mathcal{A}}^{*}p_{j}+\alpha _{2} a_{ii}p_{j}1_{\mathcal{A}}^{*}+\alpha _{3} 1_{\mathcal{A}}^{*}a_{ii}p_{j} +\alpha _{4} p_{j}a_{ii}1_{\mathcal{A}}^{*}+\alpha _{5} 1_{\mathcal{A}}^{*}p_{j}a_{ii}+\alpha _{6} p_{j}1_{\mathcal{A}}^{*}a_{ii})\\
&+\Phi (\alpha _{1} b_{ij}1_{\mathcal{A}}^{*}p_{j}+\alpha _{2} b_{ij}p_{j}1_{\mathcal{A}}^{*}+\alpha _{3} 1_{\mathcal{A}}^{*}b_{ij}p_{j}+\alpha _{4} p_{j}b_{ij}1_{\mathcal{A}}^{*}+\alpha _{5} 1_{\mathcal{A}}^{*}p_{j}b_{ij}+\alpha _{6} p_{j}1_{\mathcal{A}}^{*}b_{ij})\\
&=\Phi ((\alpha _{1} +\alpha _{2} +\alpha _{3})b_{ij}).
\end{align*}}
This shows that $\alpha _{1} f1_{\mathcal{A}}^{*}p_{j}+\alpha _{2} fp_{j}1_{\mathcal{A}}^{*}+\alpha _{3} 1_{\mathcal{A}}^{*}fp_{j} +\alpha _{4} p_{j}f1_{\mathcal{A}}^{*}+\alpha _{5} 1_{\mathcal{A}}^{*}p_{j}f+\alpha _{6} p_{j}1_{\mathcal{A}}^{*}f=(\alpha _{1} +\alpha _{2} +\alpha _{3})b_{ij}$ which leads to $(\alpha _{1}+\alpha _{2}+\alpha _{3}) f_{ij}+(\alpha _{4}+\alpha _{5}+\alpha _{6}) f_{ji}+(\sum _{k=1}^{6} \alpha _{k})f_{jj}=(\alpha _{1}+\alpha _{2}+\alpha _{3}) b_{ij}.$ As a result of this identity, we deduce that $(\alpha _{6}+\alpha _{4}+\alpha _{5}) f_{ij}+(\alpha _{2}+\alpha _{3}+\alpha _{1}) f_{ji}+(\sum _{k=1}^{6} \alpha _{k})f_{jj}=(\alpha _{6}+\alpha _{4}+\alpha _{5}) b_{ij},$ in view of Claim \ref{c21}. Thus, by adding the two last identities we end up getting $(\sum _{k=1}^{6} \alpha _{k})(f_{ij}+f_{ji}+2f_{jj})=(\sum _{k=1}^{6} \alpha _{k})b_{ij}$ which allows the conclusion that $f_{ij}=b_{ij},$ $f_{ji}=0$ and $f_{jj}=0.$ Next, for an arbitrary element $t_{ij}\in \mathcal{A}_{ij},$ we have 
{\allowdisplaybreaks\begin{align*}\allowdisplaybreaks
&\Phi  (\alpha _{1} f1_{\mathcal{A}}^{*}t_{ij}+\alpha _{2} ft_{ij}1_{\mathcal{A}}^{*}+\alpha _{3} 1_{\mathcal{A}}^{*}ft_{ij} +\alpha _{4} t_{ij}f1_{\mathcal{A}}^{*}+\alpha _{5} 1_{\mathcal{A}}^{*}t_{ij}f+\alpha _{6} t_{ij}1_{\mathcal{A}}^{*}f)\\
&=\Phi  (\alpha _{1} a_{ii}1_{\mathcal{A}}^{*}t_{ij}+\alpha _{2} a_{ii}t_{ij}1_{\mathcal{A}}^{*}+\alpha _{3} 1_{\mathcal{A}}^{*}a_{ii}t_{ij}+\alpha _{4} t_{ij}a_{ii}1_{\mathcal{A}}^{*}+\alpha _{5} 1_{\mathcal{A}}^{*}t_{ij}a_{ii}+\alpha _{6} t_{ij}1_{\mathcal{A}}^{*}a_{ii})\\
&+\Phi  (\alpha _{1} b_{ij}1_{\mathcal{A}}^{*}t_{ij}+\alpha _{2} b_{ij}t_{ij}1_{\mathcal{A}}^{*}+\alpha _{3} 1_{\mathcal{A}}^{*}b_{ij}t_{ij}+\alpha _{4} t_{ij}b_{ij}1_{\mathcal{A}}^{*}+\alpha _{5} 1_{\mathcal{A}}^{*}t_{ij}b_{ij}+\alpha _{6} t_{ij}1_{\mathcal{A}}^{*}b_{ij})\\
&=\Phi ((\alpha _{1} +\alpha _{2} +\alpha _{3})a_{ii}t_{ij})
\end{align*}}
which implies that $\alpha _{1} f1_{\mathcal{A}}^{*}t_{ij}+\alpha _{2} ft_{ij}1_{\mathcal{A}}^{*}+\alpha _{3} 1_{\mathcal{A}}^{*}ft_{ij} +\alpha _{4} t_{ij}f1_{\mathcal{A}}^{*}+\alpha _{5} 1_{\mathcal{A}}^{*}t_{ij}f+\alpha _{6} t_{ij}1_{\mathcal{A}}^{*}f=(\alpha _{1} +\alpha _{2} +\alpha _{3})a_{ii}t_{ij}.$ As a consequence of this identity we get $(\alpha _{1}+\alpha _{2}+\alpha _{3})f_{ii}t_{ij}=(\alpha _{1}+\alpha _{2}+\alpha _{3})a_{ii}t_{ij}$ which allows to deduce that $(\alpha _{6}+\alpha _{4}+\alpha _{5})f_{ii}t_{ij}=(\alpha _{6}+\alpha _{4}+\alpha _{5})a_{ii}t_{ij},$ in view again of Claim \ref{c21}. Adding the last two results we obtain $(\sum _{k=1}^{6} \alpha _{k})f_{ii}t_{ij}=(\sum _{k=1}^{6} \alpha _{k})a_{ii}t_{ij}$ which results in  $f_{ii}t_{ij}=a_{ii}t_{ij}.$ Therefore $f_{ii}=a_{ii}.$

By an entirely similar reasoning, we prove the case (ii).
\end{proof}

\begin{claim}\label{c25} For arbitrary elements $b_{ij}\in \mathcal{A}_{ij}$ and $c_{ji}\in \mathcal{A}_{ji}$ $(i\neq j; i,j=1,2)$ the following holds $\Phi (b_{ij}+c_{ji})=\Phi (b_{ij})+\Phi (c_{ji}).$
\end{claim}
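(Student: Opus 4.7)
The plan is to proceed by analogy with the proof of Claim~\ref{c24}. By surjectivity of $\Phi $, choose $f=f_{11}+f_{12}+f_{21}+f_{22}\in \mathcal{A}$ with $\Phi (f)=\Phi (b_{ij})+\Phi (c_{ji})$; the goal becomes to show $f_{ii}=0$, $f_{ij}=b_{ij}$, $f_{ji}=c_{ji}$ and $f_{jj}=0$. The new feature compared with Claim~\ref{c24} is that both summands lie in off-diagonal Peirce components, so the pair $s=1_{\mathcal{A}}$, $t=p_{j}$ used there does not reduce the right-hand side of Claim~\ref{c22}(i) to a single $\Phi $-value. The key observation to exploit is the Peirce vanishing $\mathcal{A}_{ij}\mathcal{A}_{ij}=\mathcal{A}_{ji}\mathcal{A}_{ji}=0$: this allows one to choose $s,t$ so that the sum-of-triple-products polynomial
\begin{equation*}
G(x,s,t):=\alpha _{1}xs^{*}t+\alpha _{2}xts^{*}+\alpha _{3}s^{*}xt+\alpha _{4}txs^{*}+\alpha _{5}s^{*}tx+\alpha _{6}ts^{*}x
\end{equation*}
vanishes identically on one of the summands $b_{ij}$ or $c_{ji}$; Claim~\ref{c23} then collapses the right-hand side of Claim~\ref{c22}(i) to a single $\Phi $-value, and injectivity of $\Phi $ yields an honest identity in $\mathcal{A}$.

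Concretely, take $s=1_{\mathcal{A}}$ and $t=t_{ij}$ for an arbitrary $t_{ij}\in \mathcal{A}_{ij}$; the vanishings $b_{ij}t_{ij}=t_{ij}b_{ij}=0$ give $G(b_{ij},1_{\mathcal{A}},t_{ij})=0$. Splitting the resulting equation into its $\mathcal{A}_{ii}$-, $\mathcal{A}_{ij}$- and $\mathcal{A}_{jj}$-Peirce components, combining with the permutation of coefficients from Claim~\ref{c21}, and adding, the $\mathcal{A}_{jj}$-piece yields
\begin{equation*}
\Bigl(\sum _{k=1}^{6}\alpha _{k}\Bigr)(f_{ji}-c_{ji})t_{ij}=0,\qquad t_{ij}\in \mathcal{A}_{ij}.
\end{equation*}
Since $\sum _{k=1}^{6}\alpha _{k}\ne 0$, primeness of $\mathcal{A}$ together with $p_{i}\ne 0$ forces $f_{ji}=c_{ji}$. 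The symmetric choice $s=1_{\mathcal{A}}$, $t=t_{ji}$ (where now $G(c_{ji},1_{\mathcal{A}},t_{ji})=0$) yields $f_{ij}=b_{ij}$.

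Both substitutions also produce ``cross'' identities in the $\mathcal{A}_{ij}$- and $\mathcal{A}_{ji}$-components which, after the same permutation-and-sum step, collapse to $f_{ii}t_{ij}+t_{ij}f_{jj}=0$ and $f_{jj}t_{ji}+t_{ji}f_{ii}=0$ for all $t_{ij}\in \mathcal{A}_{ij}$ and all $t_{ji}\in \mathcal{A}_{ji}$. To obtain the individual vanishings $f_{ii}=f_{jj}=0$, apply Claim~\ref{c22}(i) once more with a different off-diagonal choice, for example $s=s_{ij}\in \mathcal{A}_{ij}$ and $t=1_{\mathcal{A}}$ (which again kills $G$ on $c_{ji}$ because $c_{ji}s_{ij}^{*}=s_{ij}^{*}c_{ji}=0$), yielding an extra linear relation between $f_{ii}$ and $f_{jj}$. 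Combined with the two cross equations above and the primeness of $\mathcal{A}$, this isolates $f_{ii}$ from $f_{jj}$ and forces both to vanish; hence $f=b_{ij}+c_{ji}$, as required.

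The main obstacle is precisely this last separation step: the symmetry of Claim~\ref{c21} makes the permutation-and-sum produce only the combined relation $f_{ii}t_{ij}+t_{ij}f_{jj}=0$ rather than the individual vanishings of $f_{ii}$ and $f_{jj}$, so several substitutions (with one of $s,t$ off-diagonal) have to be combined and the primeness of $\mathcal{A}$ exercised carefully in order to decouple $f_{ii}$ from $f_{jj}$.
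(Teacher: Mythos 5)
Your identifications $f_{ij}=b_{ij}$ and $f_{ji}=c_{ji}$ are correct and essentially match the paper's computations, but the separation step you yourself flag as the main obstacle is a genuine gap, and the fix you propose does not close it. The coupled relations $f_{ii}t_{ij}+t_{ij}f_{jj}=0$ and $f_{jj}t_{ji}+t_{ji}f_{ii}=0$ do not force $f_{ii}=f_{jj}=0$ in a prime ring: they admit the nonzero solution $f_{ii}=\lambda p_{i}$, $f_{jj}=-\lambda p_{j}$ (already in $M_{2}(\mathbb{C})$ with $p_{1}=e_{11}$), since they only say that $f_{ii}-f_{jj}$ commutes with the off-diagonal corners. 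Your extra substitution $s=s_{ij}$, $t=1_{\mathcal{A}}$ produces, in the $\mathcal{A}_{ji}$-component, the relation $(\alpha _{1}+\alpha _{2}+\alpha _{4})f_{jj}s_{ij}^{*}+(\alpha _{3}+\alpha _{5}+\alpha _{6})s_{ij}^{*}f_{ii}=0$. But Claim \ref{c21} permutes $(\alpha _{1},\dots ,\alpha _{6})\mapsto (\alpha _{6},\alpha _{4},\alpha _{5},\alpha _{2},\alpha _{3},\alpha _{1})$, which swaps $\alpha _{1}+\alpha _{2}+\alpha _{3}$ with $\alpha _{4}+\alpha _{5}+\alpha _{6}$ but does not swap $\alpha _{1}+\alpha _{2}+\alpha _{4}$ with $\alpha _{3}+\alpha _{5}+\alpha _{6}$, so the permutation-and-sum device does not normalize these coefficients to $\sum _{k}\alpha _{k}$. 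Worse, whenever $\alpha _{1}+\alpha _{2}+\alpha _{4}=\alpha _{3}+\alpha _{5}+\alpha _{6}$ (for instance all $\alpha _{k}=1$, which satisfies the standing hypothesis $\sum _{k}\alpha _{k}\neq 0$ and is exactly the case of the motivating triple products), the new relation is a nonzero scalar multiple of the cross equation you already have, so nothing is decoupled and the conclusion $f_{ii}=f_{jj}=0$ does not follow.

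The paper avoids the entanglement rather than resolving it: before touching $t_{ij}$, it applies Claim \ref{c22}(i) with $s=t=p_{i}$, obtaining $\Phi (g)=\Phi ((\alpha _{5}+\alpha _{6})b_{ij})+\Phi ((\alpha _{1}+\alpha _{2})c_{ji})$ for the element $g=(\sum _{k}\alpha _{k})f_{ii}+(\alpha _{5}+\alpha _{6})f_{ij}+(\alpha _{1}+\alpha _{2})f_{ji}$, which has no $\mathcal{A}_{jj}$-component by construction. Running the $t_{ij}$-substitution on this new equation, the $\mathcal{A}_{ij}$-piece collapses to $(\sum _{k}\alpha _{k})g_{ii}t_{ij}=0$ with no contaminating $t_{ij}g_{jj}$ term, whence $f_{ii}=0$ by primeness; $f_{jj}=0$ follows symmetrically from $s=t=p_{j}$. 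If you insert this preliminary reduction, your $t_{ij}$- and $t_{ji}$-substitutions then deliver $f_{ij}=b_{ij}$ and $f_{ji}=c_{ji}$ exactly as you wrote, and the proof is complete.
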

\begin{proof} By our hypotheses, there is $f=f_{ii}+f_{ij}+f_{ji}+f_{jj}\in \mathcal{A}$ $(i\neq j;i,j=1,2)$ satisfying $\Phi (f)=\Phi (b_{ij}) + \Phi (c_{ji})$.  We have by Claim \ref{c22}(i) that 
{\allowdisplaybreaks\begin{align*}\allowdisplaybreaks
&\Phi  (\alpha _{1} fp_{i}^{*}p_{i}+\alpha _{2} fp_{i}p_{i}^{*}+\alpha _{3} p_{i}^{*}fp_{i}+\alpha _{4} p_{i}fp_{i}^{*}+\alpha _{5} p_{i}^{*}p_{i}f+\alpha _{6} p_{i}p_{i}^{*}f)\\
&=\Phi  (\alpha _{1} b_{ij}p_{i}^{*}p_{i}+\alpha _{2} b_{ij}p_{i}p_{i}^{*}+\alpha _{3} p_{i}^{*}b_{ij}p_{i}+\alpha _{4} p_{i}b_{ij}p_{i}^{*}+\alpha _{5} p_{i}^{*}p_{i}b_{ij}+\alpha _{6} p_{i}p_{i}^{*}b_{ij})\\
&+\Phi  (\alpha _{1} c_{ji}p_{i}^{*}p_{i}+\alpha _{2} c_{ji}p_{i}p_{i}^{*}+\alpha _{3} p_{i}^{*}c_{ji}p_{i} +\alpha _{4} p_{i}c_{ji}p_{i}^{*}+\alpha _{5} p_{i}^{*}p_{i}c_{ji}+\alpha _{6} p_{i}p_{i}^{*}c_{ji})
\end{align*}}
that leads to identity $\Phi  ((\sum _{k=1}^{6} \alpha _{k})f_{ii}+(\alpha _{5}+\alpha _{6})f_{ij}+(\alpha _{1}+\alpha _{2})f_{ji})=\Phi  ((\alpha _{5}+\alpha _{6})b_{ij})+\Phi  ((\alpha _{1}+\alpha _{2})c_{ji}).$ Write $g_{ii}=(\sum _{k=1}^{6} \alpha _{k})f_{ii},$ $g_{ij}=(\alpha _{5}+\alpha _{6})f_{ij},$ $g_{ji}=(\alpha _{1}+\alpha _{2})f_{ji},$ $g=g_{ii}+g_{ij}+g_{ji},$ $h_{ij}=(\alpha _{5}+\alpha _{6})b_{ij}$ and $h_{ji}=(\alpha _{1}+\alpha _{2})c_{ji}.$ Then $\Phi (g)=\Phi (h_{ij}) + \Phi (h_{ji}).$ It therefore follows that, for an arbitrary $t_{ij}\in \mathcal{A}_{ij},$ 
{\allowdisplaybreaks\begin{align*}\allowdisplaybreaks
&\Phi  (\alpha _{1} g1_{\mathcal{A}}^{*}t_{ij}+\alpha _{2} gt_{ij}1_{\mathcal{A}}^{*}+\alpha _{3} 1_{\mathcal{A}}^{*}gt_{ij}+\alpha _{4} t_{ij}g1_{\mathcal{A}}^{*}+\alpha _{5} 1_{\mathcal{A}}^{*}t_{ij}g+\alpha _{6} t_{ij}1_{\mathcal{A}}^{*}g)\\
&=\Phi  (\alpha _{1} h_{ij}1_{\mathcal{A}}^{*}t_{ij}+\alpha _{2} h_{ij}t_{ij}1_{\mathcal{A}}^{*}+\alpha _{3} 1_{\mathcal{A}}^{*}h_{ij}t_{ij}+\alpha _{4} t_{ij}h_{ij}1_{\mathcal{A}}^{*}+\alpha _{5} 1_{\mathcal{A}}^{*}t_{ij}h_{ij}+\alpha _{6} t_{ij}1_{\mathcal{A}}^{*}h_{ij})\\
&+\Phi  (\alpha _{1} h_{ji}1_{\mathcal{A}}^{*}t_{ij}+\alpha _{2} h_{ji}t_{ij}1_{\mathcal{A}}^{*}+\alpha _{3} 1_{\mathcal{A}}^{*}h_{ji}t_{ij} +\alpha _{4} t_{ij}h_{ji}1_{\mathcal{A}}^{*}+\alpha _{5} 1_{\mathcal{A}}^{*}t_{ij}h_{ji}+\alpha _{6} t_{ij}1_{\mathcal{A}}^{*}h_{ji})\\
&=\Phi  ((\alpha _{1}+\alpha _{2}+\alpha _{3})h_{ji}t_{ij}+(\alpha _{4}+\alpha _{5}+\alpha _{6})t_{ij}h_{ji})
\end{align*}}
which shows that $\alpha _{1} g1_{\mathcal{A}}^{*}t_{ij}+\alpha _{2} gt_{ij}1_{\mathcal{A}}^{*}+\alpha _{3} 1_{\mathcal{A}}^{*}gt_{ij}+\alpha _{4} t_{ij}g1_{\mathcal{A}}^{*}+\alpha _{5} 1_{\mathcal{A}}^{*}t_{ij}g+\alpha _{6} t_{ij}1_{\mathcal{A}}^{*}g=(\alpha _{1}+\alpha _{2}+\alpha _{3})h_{ji}t_{ij}+(\alpha _{4}+\alpha _{5}+\alpha _{6})t_{ij}h_{ji}.$ As a result we get the identity $(\alpha _{1}+\alpha _{2}+\alpha _{3})g_{ii}t_{ij}+(\alpha _{1}+\alpha _{2}+\alpha _{3})g_{ji}t_{ij}+(\alpha _{4}+\alpha _{5}+\alpha _{6})t_{ij}g_{ji}=(\alpha _{1}+\alpha _{2}+\alpha _{3})h_{ji}t_{ij}+(\alpha _{4}+\alpha _{5}+\alpha _{6})t_{ij}h_{ji}.$ This shows that $(\alpha _{6}+\alpha _{4}+\alpha _{5})g_{ii}t_{ij}+(\alpha _{6}+\alpha _{4}+\alpha _{5})g_{ji}t_{ij}+(\alpha _{2}+\alpha _{3}+\alpha _{1})t_{ij}g_{ji}=(\alpha _{6}+\alpha _{4}+\alpha _{5})h_{ji}t_{ij}+(\alpha _{2}+\alpha _{3}+\alpha _{1})t_{ij}h_{ji},$ by Claim \ref{c21}. Adding these last two results we obtain $(\sum _{k=1}^{6} \alpha _{k})(g_{ii}t_{ij}+g_{ji}t_{ij}+t_{ij}g_{ji})=(\sum _{k=1}^{6} \alpha _{k})(h_{ji}t_{ij}+t_{ij}h_{ji})$ that allows to deduce that $g_{ii}t_{ij}=0.$ Thus $g_{ii}=0$ which implies that $f_{ii}=0.$ Using a similar reasoning as before, we prove that $f_{jj}=0.$ Next, for an arbitrary element $t_{ji}\in \mathcal{A}_{ji},$ we have 
{\allowdisplaybreaks\begin{align*}\allowdisplaybreaks
&\Phi  (\alpha _{1} f1_{\mathcal{A}}^{*}t_{ji}+\alpha _{2} ft_{ji}1_{\mathcal{A}}^{*}+\alpha _{3} 1_{\mathcal{A}}^{*}ft_{ji}+\alpha _{4} t_{ji}f1_{\mathcal{A}}^{*}+\alpha _{5} 1_{\mathcal{A}}^{*}t_{ji}f+\alpha _{6} t_{ji}1_{\mathcal{A}}^{*}f)\\
&=\Phi  (\alpha _{1} b_{ij}1_{\mathcal{A}}^{*}t_{ji}+\alpha _{2} b_{ij}t_{ji}1_{\mathcal{A}}^{*}+\alpha _{3} 1_{\mathcal{A}}^{*}b_{ij}t_{ji}+\alpha _{4} t_{ji}b_{ij}1_{\mathcal{A}}^{*}+\alpha _{5} 1_{\mathcal{A}}^{*}t_{ji}b_{ij}+\alpha _{6} t_{ji}1_{\mathcal{A}}^{*}b_{ij})\\
&+\Phi  (\alpha _{1} c_{ji}1_{\mathcal{A}}^{*}t_{ji}+\alpha _{2} c_{ji}t_{ji}1_{\mathcal{A}}^{*}+\alpha _{3} 1_{\mathcal{A}}^{*}c_{ji}t_{ji} +\alpha _{4} t_{ji}c_{ji}1_{\mathcal{A}}^{*}+\alpha _{5} 1_{\mathcal{A}}^{*}t_{ji}c_{ji}+\alpha _{6} t_{ji}1_{\mathcal{A}}^{*}c_{ji})\\
&=\Phi  ((\alpha _{1}+\alpha _{2}+\alpha _{3})b_{ij}t_{ji}+(\alpha _{4}+\alpha _{5}+\alpha _{6})t_{ji}b_{ij}).
\end{align*}}
This results that $\alpha _{1} f1_{\mathcal{A}}^{*}t_{ji}+\alpha _{2} ft_{ji}1_{\mathcal{A}}^{*}+\alpha _{3} 1_{\mathcal{A}}^{*}ft_{ji} +\alpha _{4} t_{ji}f1_{\mathcal{A}}^{*}+\alpha _{5} 1_{\mathcal{A}}^{*}t_{ji}f+\alpha _{6} t_{ji}1_{\mathcal{A}}^{*}f=(\alpha _{1}+\alpha _{2}+\alpha _{3})b_{ij}t_{ji}+(\alpha _{4}+\alpha _{5}+\alpha _{6})t_{ji}b_{ij}.$ As a consequence of this identity, we deduce that $(\alpha _{1}+\alpha _{2}+\alpha _{3})f_{ij}t_{ji}+(\alpha _{4}+\alpha _{5}+\alpha _{6})t_{ji}f_{ij}=(\alpha _{1}+\alpha _{2}+\alpha _{3})b_{ij}t_{ji}+(\alpha _{4}+\alpha _{5}+\alpha _{6})t_{ji}b_{ij}$ from which we can also deduce that $(\alpha _{6}+\alpha _{4}+\alpha _{5})f_{ij}t_{ji}+(\alpha _{2}+\alpha _{3}+\alpha _{1})t_{ji}f_{ij}=(\alpha _{6}+\alpha _{4}+\alpha _{5})b_{ij}t_{ji}+(\alpha _{2}+\alpha _{3}+\alpha _{1})t_{ji}b_{ij},$ by Claim \ref{c21}. Adding the last two identities we find $(\sum _{k=1}^{6} \alpha _{k}) (f_{ij}t_{ji}+t_{ji}f_{ij})=(\sum _{k=1}^{6} \alpha _{k})(b_{ij}t_{ji}+t_{ji}b_{ij})$ which leads to the conclusion that $f_{ij}t_{ji}=b_{ij}t_{ji}.$ Therefore, $f_{ij}=b_{ij}.$ Using a similar reasoning as before, we prove that $f_{ji}=c_{ji}.$
\end{proof}

\begin{claim}\label{c26} For arbitrary elements  $a_{ij},b_{ij}\in \mathcal{A}_{ij}$ $(i\neq j;i,j=1,2)$ we have $\Phi (a_{ij}+b_{ij})=\Phi (a_{ij})+ \Phi (b_{ij}).$
\end{claim}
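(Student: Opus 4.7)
My plan is to adapt the iterative template used in the proof of Claim \ref{c25}. By surjectivity of $\Phi$, pick $f=f_{ii}+f_{ij}+f_{ji}+f_{jj}\in\mathcal{A}$ with $\Phi(f)=\Phi(a_{ij})+\Phi(b_{ij})$; the task is then to show $f_{ii}=f_{jj}=f_{ji}=0$ and $f_{ij}=a_{ij}+b_{ij}$. The general scheme is to apply Claim \ref{c22}(i) with carefully chosen $s,t$ to obtain derived identities $\Phi(g)=\Phi(h_a)+\Phi(h_b)$, iterate Claim \ref{c22}(i) on those identities with $s=1_\mathcal{A}$ and $t=t_{ij}\in\mathcal{A}_{ij}$ to force the $\mathcal{A}_{ij}$-part of the right-hand side to vanish (since $\mathcal{A}_{ij}^{2}=0$ kills every monomial of $T(h,1_\mathcal{A},t_{ij})$ when $h\in\mathcal{A}_{ij}$), and then finish with Claim \ref{c21}, the summation trick, Peirce decomposition and primeness of $\mathcal{A}$.

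Concretely, for the first phase I would first apply Claim \ref{c22}(i) with $s=t=p_{i}$, which yields $\Phi(g)=\Phi(h_a)+\Phi(h_b)$ where $g=(\sum_{k=1}^{6}\alpha_k)f_{ii}+(\alpha_5+\alpha_6)f_{ij}+(\alpha_1+\alpha_2)f_{ji}$ and $h_a=(\alpha_5+\alpha_6)a_{ij}$, $h_b=(\alpha_5+\alpha_6)b_{ij}\in\mathcal{A}_{ij}$. Applying Claim \ref{c22}(i) once more with $s=1_\mathcal{A}$, $t=t_{ij}$ arbitrary, the triple products of $h_a$ and $h_b$ collapse to $0$, so by Claim \ref{c23} and injectivity of $\Phi$ the triple product of $g$ equals $0$. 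A permuted version via Claim \ref{c21}, addition, and $\sum\alpha_k\neq0$ leave $g_{ii}t_{ij}+g_{ji}t_{ij}+t_{ij}g_{ji}=0$ for every $t_{ij}\in\mathcal{A}_{ij}$; Peirce separation forces each summand to vanish, and primeness of $\mathcal{A}$ gives $g_{ii}=g_{ji}=0$, i.e.\ $f_{ii}=0$ and $(\alpha_1+\alpha_2)f_{ji}=0$. A perfectly symmetric run with $s=t=p_j$ gives $f_{jj}=0$ and $(\alpha_5+\alpha_6)f_{ji}=0$. If $\alpha_1+\alpha_2\neq0$ or $\alpha_5+\alpha_6\neq0$ this already yields $f_{ji}=0$; otherwise $\alpha_3+\alpha_4\neq0$, and an analogous iteration starting from Claim \ref{c22}(i) with $s=p_i,\,t=p_j$ (whose triple product is $\alpha_3 c_{ij}+\alpha_4 c_{ji}$) handles this corner case.

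For the final phase we have $f=f_{ij}$ with $\Phi(f_{ij})=\Phi(a_{ij})+\Phi(b_{ij})$. I would apply Claim \ref{c22}(i) once more with $s=1_\mathcal{A}$ and $t=t_{ji}\in\mathcal{A}_{ji}$ arbitrary; the relevant triple-product expressions now lie in $\mathcal{A}_{ii}\oplus\mathcal{A}_{jj}$. Combining the resulting identity with Claim \ref{c21}, the $\sum\alpha_k$-cancellation, and Peirce plus primeness arguments, one extracts $(f_{ij}-a_{ij}-b_{ij})t_{ji}=0$ and $t_{ji}(f_{ij}-a_{ij}-b_{ij})=0$ for every $t_{ji}$, from which primeness of $\mathcal{A}$ delivers $f_{ij}=a_{ij}+b_{ij}$.

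The main obstacle is this last identification step, because the triple products produced at that stage land in $\mathcal{A}_{ii}\oplus\mathcal{A}_{jj}$, a block pair whose additivity is \emph{not} directly furnished by Claims \ref{c24} or \ref{c25}; consequently, the splitting of the right-hand side into separate $\Phi$-values must either be engineered by a more delicate triple-product computation or be preceded by an intermediate additivity lemma for $\mathcal{A}_{ii}+\mathcal{A}_{jj}$ obtained by the same machinery. The scalar bookkeeping in the corner case of the $f_{ji}=0$ step is a secondary but nontrivial source of difficulty.
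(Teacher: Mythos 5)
Your first phase (forcing $f_{ii}=f_{jj}=f_{ji}=0$ by testing $f$ against $s=t=p_i$, $s=t=p_j$ and then $t_{ij}$) is workable, but the obstacle you yourself flag at the end is not a technicality: it sinks this route. Once you are reduced to $\Phi(f_{ij})=\Phi(a_{ij})+\Phi(b_{ij})$, every admissible test produces on the right-hand side two generically nonzero values of $\Phi$, one coming from $a_{ij}$ and one from $b_{ij}$, lying in the same Peirce blocks (because $a_{ij}$ and $b_{ij}$ sit in the same corner $\mathcal{A}_{ij}$); injectivity alone cannot convert such a sum into $\Phi$ of a single element. This is precisely what made the final steps of Claims \ref{c24} and \ref{c25} work: there one of the two test images always vanished. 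Your proposed repair via an auxiliary additivity lemma for $\mathcal{A}_{ii}+\mathcal{A}_{jj}$ does not close the gap either, since after splitting $\Phi((\alpha_1+\alpha_2+\alpha_3)a_{ij}t_{ji}+(\alpha_4+\alpha_5+\alpha_6)t_{ji}a_{ij})$ into its diagonal pieces you must still recombine $\Phi((\alpha_1+\alpha_2+\alpha_3)a_{ij}t_{ji})+\Phi((\alpha_1+\alpha_2+\alpha_3)b_{ij}t_{ji})$ into $\Phi$ of the sum; that is additivity on $\mathcal{A}_{ii}$, which is Claim \ref{c27} and is proved in the paper \emph{using} Claim \ref{c26}. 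The plan is therefore circular as it stands. (A smaller issue: in your corner case the run $s=p_i$, $t=p_j$ only yields $\alpha_{4}f_{ji}=0$; you also need the companion run $s=p_j$, $t=p_i$ to get $\alpha_{3}f_{ji}=0$ before adding.)

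The paper avoids the preimage method here altogether. It observes that $(\alpha_1+\alpha_2+\alpha_3)(a_{ij}+b_{ij})$ is itself the sum of triple products of the three elements $p_i+a_{ij}$, $1_{\mathcal{A}}$ and $p_j+b_{ij}$, applies the preserving identity (\ref{fundident}) to this triple, uses Claim \ref{c24}(i) to replace $\Phi(p_i+a_{ij})$ by $\Phi(p_i)+\Phi(a_{ij})$ and $\Phi(p_j+b_{ij})$ by $\Phi(p_j)+\Phi(b_{ij})$, and expands: the four resulting groups of terms are again right-hand sides of (\ref{fundident}) and collapse to $\Phi(0)+\Phi((\alpha_1+\alpha_2+\alpha_3)a_{ij})+\Phi((\alpha_1+\alpha_2+\alpha_3)b_{ij})+\Phi(0)$. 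Claim \ref{c21} gives the same statement with $\alpha_6+\alpha_4+\alpha_5$ in place of $\alpha_1+\alpha_2+\alpha_3$, and since $\sum_{k=1}^{6}\alpha_k\neq 0$ at least one of these two scalars is nonzero, whence the claim by rescaling $a_{ij}$ and $b_{ij}$. You would need to switch to some such forward realization of $a_{ij}+b_{ij}$ as a triple product of elements whose additivity is already established; a backward preimage argument cannot separate two summands drawn from the same Peirce corner.
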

\begin{proof} First, note that 
{\allowdisplaybreaks\begin{align*}\allowdisplaybreaks
&(\alpha _{1}+\alpha _{2}+\alpha _{3}) (a_{ij}+b_{ij})\\
&=\alpha _{1} (p_{i}+a_{ij})1_{\mathcal{A}}^{*}(p_{j}+b_{ij})+\alpha _{2} (p_{i}+a_{ij})(p_{j}+b_{ij})1_{\mathcal{A}}^{*}\\
&+\alpha _{3} 1_{\mathcal{A}}^{*}(p_{i}+a_{ij})(p_{j}+b_{ij})+\alpha _{4} (p_{j}+b_{ij})(p_{i}+a_{ij})1_{\mathcal{A}}^{*}\\
&+\alpha _{5} 1_{\mathcal{A}}^{*}(p_{j}+b_{ij})(p_{i}+a_{ij})+\alpha _{6} (p_{j}+b_{ij})1_{\mathcal{A}}^{*}(p_{i}+a_{ij}),
\end{align*}}
for all elements $a_{ij},b_{ij}\in \mathcal{A}_{ij}.$ Hence, by (\ref{fundident}) and Claim \ref{c24}(i), we have
{\allowdisplaybreaks\begin{align*}\allowdisplaybreaks
&\Phi ((\alpha _{1}+\alpha _{2}+\alpha _{3}) (a_{ij}+b_{ij}))\\
&=\Phi (\alpha _{1} (p_{i}+a_{ij})1_{\mathcal{A}}^{*}(p_{j}+b_{ij})+\alpha _{2} (p_{i}+a_{ij})(p_{j}+b_{ij})1_{\mathcal{A}}^{*}\\
&+\alpha _{3} 1_{\mathcal{A}}^{*}(p_{i}+a_{ij})(p_{j}+b_{ij})+\alpha _{4} (p_{j}+b_{ij})(p_{i}+a_{ij})1_{\mathcal{A}}^{*}\\
&+\alpha _{5} 1_{\mathcal{A}}^{*}(p_{j}+b_{ij})(p_{i}+a_{ij})+\alpha _{6} (p_{j}+b_{ij})1_{\mathcal{A}}^{*}(p_{i}+a_{ij}))\\
&=\alpha _{1} \Phi (p_{i}+a_{ij})\Phi (1_{\mathcal{A}})^{*}\Phi (p_{j}+b_{ij})+\alpha _{2} \Phi (p_{i}+a_{ij})\Phi (p_{j}+b_{ij})\Phi (1_{\mathcal{A}})^{*}\\
&+\alpha _{3} \Phi (1_{\mathcal{A}})^{*}\Phi (p_{i}+a_{ij})\Phi (p_{j}+b_{ij})+\alpha _{4} \Phi (p_{j}+b_{ij})\Phi (p_{i}+a_{ij})\Phi (1_{\mathcal{A}})^{*}\\
&+\alpha _{5} \Phi (1_{\mathcal{A}})^{*}\Phi (p_{j}+b_{ij})\Phi (p_{i}+a_{ij})+\alpha _{6} \Phi (p_{j}+b_{ij})\Phi (1_{\mathcal{A}})^{*}\Phi (p_{i}+a_{ij})\\
&=\alpha _{1} (\Phi (p_{i})+\Phi (a_{ij}))\Phi (1_{\mathcal{A}})^{*}(\Phi (p_{j})+\Phi (b_{ij}))\\
&+\alpha _{2} (\Phi (p_{i})+\Phi (a_{ij}))(\Phi (p_{j})+\Phi (b_{ij}))\Phi (1_{\mathcal{A}})^{*}\\
&+\alpha _{3} \Phi (1_{\mathcal{A}})^{*}(\Phi (p_{i})+\Phi (a_{ij}))(\Phi (p_{j})+\Phi (b_{ij}))\\
&+\alpha _{4} (\Phi (p_{j})+\Phi (b_{ij}))(\Phi (p_{i})+\Phi (a_{ij}))\Phi (1_{\mathcal{A}})^{*}\\
&+\alpha _{5} \Phi (1_{\mathcal{A}})^{*}(\Phi (p_{j})+\Phi (b_{ij}))(\Phi (p_{i})+\Phi (a_{ij}))\\
&+\alpha _{6} (\Phi (p_{j})+\Phi (b_{ij}))\Phi (1_{\mathcal{A}})^{*}(\Phi (p_{i})+\Phi (a_{ij}))\\
&=\alpha _{1} \Phi (p_{i})\Phi (1_{\mathcal{A}})^{*}\Phi (p_{j})+\alpha _{2} \Phi (p_{i})\Phi (p_{j})\Phi (1_{\mathcal{A}})^{*}+\alpha _{3} \Phi (1_{\mathcal{A}})^{*}\Phi (p_{i})\Phi (p_{j})\\
&+\alpha _{4} \Phi (p_{j})\Phi (p_{i})\Phi (1_{\mathcal{A}})^{*}+\alpha _{5} \Phi (1_{\mathcal{A}})^{*}\Phi (p_{j})\Phi (p_{i})+\alpha _{6} \Phi (p_{j})\Phi (1_{\mathcal{A}})^{*}\Phi (p_{i})\\
&+\alpha _{1} \Phi (a_{ij})\Phi (1_{\mathcal{A}})^{*}\Phi (p_{j})+\alpha _{2} \Phi (a_{ij})\Phi (p_{j})\Phi (1_{\mathcal{A}})^{*}+\alpha _{3} \Phi (1_{\mathcal{A}})^{*}\Phi (a_{ij})\Phi (p_{j})\\
&+\alpha _{4} \Phi (p_{j})\Phi (a_{ij})\Phi (1_{\mathcal{A}})^{*}+\alpha _{5} \Phi (1_{\mathcal{A}})^{*}\Phi (p_{j})\Phi (a_{ij})+\alpha _{6} \Phi (p_{j})\Phi (1_{\mathcal{A}})^{*}\Phi (a_{ij})\\
&+\alpha _{1} \Phi (p_{i})\Phi (1_{\mathcal{A}})^{*}\Phi (b_{ij})
+\alpha _{2} \Phi (p_{i})\Phi (b_{ij})\Phi (1_{\mathcal{A}})^{*}
+\alpha _{3} \Phi (1_{\mathcal{A}})^{*}\Phi (p_{i})\Phi (b_{ij})\\
&+\alpha _{4} \Phi (b_{ij})\Phi (p_{i})\Phi (1_{\mathcal{A}})^{*}
+\alpha _{5} \Phi (1_{\mathcal{A}})^{*}\Phi (b_{ij})\Phi (p_{i})
+\alpha _{6} \Phi (b_{ij})\Phi (1_{\mathcal{A}})^{*}\Phi (p_{i})\\
&+\alpha _{1} \Phi (a_{ij})\Phi (1_{\mathcal{A}})^{*}\Phi (b_{ij})
+\alpha _{2} \Phi (a_{ij})\Phi (b_{ij})\Phi (1_{\mathcal{A}})^{*}
+\alpha _{3} \Phi (1_{\mathcal{A}})^{*}\Phi (a_{ij})\Phi (b_{ij})\\
&+\alpha _{4} \Phi (b_{ij})\Phi (a_{ij})\Phi (1_{\mathcal{A}})^{*}
+\alpha _{5} \Phi (1_{\mathcal{A}})^{*}\Phi (b_{ij})\Phi (a_{ij})
+\alpha _{6} \Phi (b_{ij})\Phi (1_{\mathcal{A}})^{*}\Phi (a_{ij})\\
&=\Phi (\alpha _{1}p_{i}1_{\mathcal{A}}^{*}p_{j}+\alpha _{2} p_{i}p_{j}1_{\mathcal{A}}^{*}+\alpha _{3} 1_{\mathcal{A}}^{*}p_{i}p_{j}+\alpha _{4} p_{j}p_{i}1_{\mathcal{A}}^{*}+\alpha _{5} 1_{\mathcal{A}}^{*}p_{j}p_{i} \\
&+\alpha _{6} p_{j}1_{\mathcal{A}}^{*}p_{i})+\Phi (\alpha _{1} a_{ij}1_{\mathcal{A}}^{*}p_{j}+\alpha _{2} a_{ij}p_{j}1_{\mathcal{A}}^{*}+\alpha _{3} 1_{\mathcal{A}}^{*}a_{ij}p_{j}+\alpha _{4} p_{j}a_{ij}1_{\mathcal{A}}^{*}\\
&+\alpha _{5} 1_{\mathcal{A}}^{*}p_{j}a_{ij}+\alpha _{6} p_{j}1_{\mathcal{A}}^{*}a_{ij})+\Phi (\alpha _{1} p_{i}1_{\mathcal{A}}^{*}b_{ij}+\alpha _{2} p_{i}b_{ij}1_{\mathcal{A}}^{*}+\alpha _{3} 1_{\mathcal{A}}^{*}p_{i}b_{ij}\\
&+\alpha _{4} b_{ij}p_{i}1_{\mathcal{A}}^{*}
+\alpha _{5} 1_{\mathcal{A}}^{*}b_{ij}p_{i}
+\alpha _{6} b_{ij}1_{\mathcal{A}}^{*}p_{i})+\Phi (\alpha _{1} a_{ij}1_{\mathcal{A}}^{*}b_{ij}
+\alpha _{2} a_{ij}b_{ij}1_{\mathcal{A}}^{*}\\
&+\alpha _{3} 1_{\mathcal{A}}^{*}a_{ij}b_{ij}+\alpha _{4} b_{ij}a_{ij}1_{\mathcal{A}}^{*}
+\alpha _{5} 1_{\mathcal{A}}^{*}b_{ij}a_{ij}
+\alpha _{6} b_{ij}1_{\mathcal{A}}^{*}a_{ij})\\
&=\Phi ((\alpha _{1}+\alpha _{2}+\alpha _{3}) a_{ij})+\Phi ((\alpha _{1}+\alpha _{2}+\alpha _{3}) b_{ij}).
\end{align*}}
Thus
{\allowdisplaybreaks\begin{align}\allowdisplaybreaks\label{id03}
\Phi ((\alpha _{1}+\alpha _{2}+\alpha _{3}) (a_{ij}+b_{ij}))=\Phi ((\alpha _{1}+\alpha _{2}+\alpha _{3})a_{ij})+\Phi ((\alpha _{1}+\alpha _{2}+\alpha _{3}) b_{ij}).
\end{align}}
However, by Claim \ref{c21}, we have
{\allowdisplaybreaks\begin{align}\allowdisplaybreaks\label{id04}
\Phi ((\alpha _{6}+\alpha _{4}+\alpha _{5})(a_{ij}+b_{ij}))=\Phi ((\alpha _{6}+\alpha _{4}+\alpha _{5}) a_{ij})+\Phi ((\alpha _{6}+\alpha _{4}+\alpha _{5}) b_{ij}).
\end{align}}
Therefore, if $\alpha _{1}+\alpha _{2}+\alpha _{3}\neq 0,$ then the identity $\Phi (a_{ij}+b_{ij})=\Phi (a_{ij})+ \Phi (b_{ij})$ follows directly from (\ref{id03}). Otherwise, we must have $\alpha _{6}+\alpha _{4}+\alpha _{5}\neq 0$ which also leads to $\Phi (a_{ij}+b_{ij})=\Phi (a_{ij})+ \Phi (b_{ij}),$ by identity (\ref{id04}).
\end{proof}

\begin{claim}\label{c27} For arbitrary elements $a_{ii},b_{ii}\in \mathcal{A}_{ii}$ $(i=1,2),$ we have $\Phi (a_{ii}+b_{ii})=\Phi (a_{ii})+\Phi (b_{ii}).$ 
\end{claim}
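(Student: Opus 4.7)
The plan is to mirror the strategy of Claims \ref{c24} and \ref{c25}: invoke surjectivity of $\Phi$ to obtain $f\in\mathcal{A}$ with $\Phi(f)=\Phi(a_{ii})+\Phi(b_{ii})$, decompose $f=f_{11}+f_{12}+f_{21}+f_{22}$ into its Peirce components, and then successively annihilate the components outside $\mathcal{A}_{ii}$ through well-chosen instances of Claim \ref{c22}(i), combined with Claims \ref{c21} and \ref{c23}.

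First I would plug $s=1_{\mathcal{A}}$ and $t=p_{j}$ (with $j\neq i$) into Claim \ref{c22}(i). Since $a_{ii}p_{j}=p_{j}a_{ii}=b_{ii}p_{j}=p_{j}b_{ii}=0$, the right-hand side collapses to $\Phi(0)+\Phi(0)=0$, while the left-hand side reduces to $\Phi\bigl((\alpha_{1}+\alpha_{2}+\alpha_{3})f_{ij}+(\alpha_{4}+\alpha_{5}+\alpha_{6})f_{ji}+(\sum_{k=1}^{6}\alpha_{k})f_{jj}\bigr)$. Injectivity yields one linear relation, Claim \ref{c21} provides the companion relation with the coefficient blocks $(\alpha_{1}+\alpha_{2}+\alpha_{3})$ and $(\alpha_{4}+\alpha_{5}+\alpha_{6})$ swapped, and adding them forces $(\sum_{k=1}^{6}\alpha_{k})(f_{ij}+f_{ji}+2f_{jj})=0$. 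Because the three summands lie in distinct Peirce subspaces and $\sum\alpha_{k}\neq 0$, I conclude $f_{ij}=f_{ji}=f_{jj}=0$, so $f=f_{ii}$.

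Next I would reapply Claim \ref{c22}(i) with $s=1_{\mathcal{A}}$ and $t=t_{ij}$ for arbitrary $t_{ij}\in\mathcal{A}_{ij}$. Using the Peirce relation $\mathcal{A}_{ij}\mathcal{A}_{ii}=0$, the three $\alpha_{4},\alpha_{5},\alpha_{6}$-terms drop out, leaving $\Phi((\alpha_{1}+\alpha_{2}+\alpha_{3})f_{ii}t_{ij})$ on the left and $\Phi((\alpha_{1}+\alpha_{2}+\alpha_{3})a_{ii}t_{ij})+\Phi((\alpha_{1}+\alpha_{2}+\alpha_{3})b_{ii}t_{ij})$ on the right. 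Since both right-hand summands already lie in $\mathcal{A}_{ij}$, Claim \ref{c26} merges them into the single image $\Phi((\alpha_{1}+\alpha_{2}+\alpha_{3})(a_{ii}+b_{ii})t_{ij})$. Injectivity, one more application of Claim \ref{c21}, and addition then give $(\sum_{k=1}^{6}\alpha_{k})(f_{ii}-a_{ii}-b_{ii})t_{ij}=0$, equivalently $(f_{ii}-a_{ii}-b_{ii})\mathcal{A}p_{j}=0$. Primeness of $\mathcal{A}$ together with $p_{j}\neq 0$ forces $f_{ii}=a_{ii}+b_{ii}$, so $\Phi(a_{ii}+b_{ii})=\Phi(f)=\Phi(a_{ii})+\Phi(b_{ii})$.

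The step I expect to be the main obstacle is the second one. Unlike the off-diagonal case of Claim \ref{c26}, where $a_{ij}+b_{ij}$ can be engineered as the outcome of a single triple product built from $p_{i}+a_{ij}$ and $p_{j}+b_{ij}$ without spurious cross terms (thanks to $\mathcal{A}_{ij}\mathcal{A}_{ij}=0$), the diagonal sum $a_{ii}+b_{ii}$ admits no such clean product representation. One must therefore route the argument through the already established additivity on $\mathcal{A}_{ij}$ to collapse the two $\Phi$-values on the right, and then appeal to primeness to promote the resulting pointwise-on-$t_{ij}$ identity to an equality inside $\mathcal{A}_{ii}$.
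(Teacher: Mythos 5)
Your proposal is correct and follows essentially the same route as the paper's own proof: the same choice of $s=1_{\mathcal{A}}$ with $t=p_{j}$ to kill $f_{ij},f_{ji},f_{jj}$, the same second pass with $t=t_{ij}$ using Claim \ref{c26} to merge the two images, and the same add-the-Claim-\ref{c21}-companion trick followed by primeness to conclude $f_{ii}=a_{ii}+b_{ii}$. No discrepancies worth noting.
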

\begin{proof} Take an element $f=f_{ii}+f_{ij}+f_{ji}+f_{jj}\in \mathcal{A}$ such that $\Phi (f)=\Phi (a_{ii})+\Phi (b_{ii})$. Then 
{\allowdisplaybreaks\begin{align*}\allowdisplaybreaks
&\Phi (\alpha _{1} f1_{\mathcal{A}}^{*}p_{j}+\alpha _{2} fp_{j}1_{\mathcal{A}}^{*}+\alpha _{3} 1_{\mathcal{A}}^{*}fp_{j}+\alpha _{4} p_{j}f1_{\mathcal{A}}^{*}+\alpha _{5} 1_{\mathcal{A}}^{*}p_{j}f+\alpha _{6} p_{j}1_{\mathcal{A}}^{*}f)\\
&=\Phi (\alpha _{1} a_{ii}1_{\mathcal{A}}^{*}p_{j}+\alpha _{2} a_{ii}p_{j}1_{\mathcal{A}}^{*}+\alpha _{3} 1_{\mathcal{A}}^{*}a_{ii}p_{j}+\alpha _{4} p_{j}a_{ii}1_{\mathcal{A}}^{*}+\alpha _{5} 1_{\mathcal{A}}^{*}p_{j}a_{ii}+\alpha _{6} p_{j}1_{\mathcal{A}}^{*}a_{ii})\\
&+\Phi (\alpha _{1} b_{ii}1_{\mathcal{A}}^{*}p_{j}+\alpha _{2} b_{ii}p_{j}1_{\mathcal{A}}^{*}+\alpha _{3} 1_{\mathcal{A}}^{*}b_{ii}p_{j}+\alpha _{4} p_{j}b_{ii}1_{\mathcal{A}}^{*}+\alpha _{5} 1_{\mathcal{A}}^{*}p_{j}b_{ii}+\alpha _{6} p_{j}1_{\mathcal{A}}^{*}b_{ii})\\
&=0.
\end{align*}}
This shows that $\alpha _{1} f1_{\mathcal{A}}^{*}p_{j}+\alpha _{2} fp_{j}1_{\mathcal{A}}^{*}+\alpha _{3} 1_{\mathcal{A}}^{*}fp_{j}+\alpha _{4} p_{j}f1_{\mathcal{A}}^{*}+\alpha _{5} 1_{\mathcal{A}}^{*}p_{j}f+\alpha _{6} p_{j}1_{\mathcal{A}}^{*}f=0$ which leads to the identity $(\alpha _{1}+\alpha _{2}+\alpha _{3})f_{ij}+(\alpha _{4}+\alpha _{5}+\alpha _{6})f_{ji}+(\sum _{k=1}^{6} \alpha _{k})f_{jj})=0.$ From this we deduce that $(\alpha _{6}+\alpha _{4}+\alpha _{5})f_{ij}+(\alpha _{2}+\alpha _{3}+\alpha _{1})f_{ji}+(\sum _{k=1}^{6} \alpha _{k})f_{jj})=0,$ by Claim \ref{c21}. Adding the two last equations yields $(\sum _{k=1}^{6} \alpha _{k} )(f_{ij}+f_{ji}+2f_{jj})=0$ which results in $f_{ij}=0,$ $f_{ji}=0$ and $f_{jj}=0.$ It therefore follows that $\Phi (f_{ii})=\Phi (a_{ii})+\Phi (b_{ii})$. Hence, for an arbitrary element $t_{ij}\in \mathcal{A}_{ij},$ we have
{\allowdisplaybreaks\begin{align*}\allowdisplaybreaks
&\Phi (\alpha _{1} f_{ii}1_{\mathcal{A}}^{*}t_{ij}+\alpha _{2} f_{ii}t_{ij}1_{\mathcal{A}}^{*}+\alpha _{3} 1_{\mathcal{A}}^{*}f_{ii}t_{ij} +\alpha _{4} t_{ij}f_{ii}1_{\mathcal{A}}^{*}+\alpha _{5} 1_{\mathcal{A}}^{*}t_{ij}f_{ii}+\alpha _{6} t_{ij}1_{\mathcal{A}}^{*}f_{ii})\\
&=\Phi (\alpha _{1} a_{ii}1_{\mathcal{A}}^{*}t_{ij}+\alpha _{2} a_{ii}t_{ij}1_{\mathcal{A}}^{*}+\alpha _{3} 1_{\mathcal{A}}^{*}a_{ii}t_{ij}+\alpha _{4} t_{ij}a_{ii}1_{\mathcal{A}}^{*}+\alpha _{5} 1_{\mathcal{A}}^{*}t_{ij}a_{ii}+\alpha _{6} t_{ij}1_{\mathcal{A}}^{*}a_{ii})\\ 
&+\Phi (\alpha _{1} b_{ii}1_{\mathcal{A}}^{*}t_{ij}+\alpha _{2} b_{ii}t_{ij}1_{\mathcal{A}}^{*}+\alpha _{3} 1_{\mathcal{A}}^{*}b_{ii}t_{ij}+\alpha _{4} t_{ij}b_{ii}1_{\mathcal{A}}^{*}+\alpha _{5} 1_{\mathcal{A}}^{*}t_{ij}b_{ii}+\alpha _{6} t_{ij}1_{\mathcal{A}}^{*}b_{ii})\\
&=\Phi ((\alpha _{1} +\alpha _{2}+\alpha _{3})a_{ii}t_{ij})+\Phi ((\alpha _{1} +\alpha _{2}+\alpha _{3})b_{ii}t_{ij})\\
&=\Phi ((\alpha _{1} +\alpha _{2}+\alpha _{3})(a_{ii}+b_{ii})t_{ij}),
\end{align*}}
by Claim \ref{c26}, which results that $(\alpha _{1} +\alpha _{2}+\alpha _{3})f_{ii}t_{ij}=(\alpha _{1} +\alpha _{2}+\alpha _{3})(a_{ii}+b_{ii})t_{ij}.$ This makes it possible to deduce that $(\alpha _{6} +\alpha _{4}+\alpha _{5})f_{ii}t_{ij}=(\alpha _{6} +\alpha _{4}+\alpha _{5})(a_{ii}+b_{ii})t_{ij},$ by Claim \ref{c21}. Thus, adding the two last identities we get $(\sum _{k=1}^{6} \alpha _{k}) f_{ii}t_{ij}=(\sum _{k=1}^{6} \alpha _{k} )(a_{ii}+b_{ii})t_{ij}$ which yields $f_{ii}t_{ij}=(a_{ii}+b_{ii})t_{ij}.$ As consequence, we obtain $f_{ii}=a_{ii}+b_{ii}.$
\end{proof}

\begin{claim}\label{c28} For arbitrary elements $a_{ii}\in \mathcal{A}_{ii}$, $b_{ij}\in \mathcal{A}_{ij}$, $c_{ji}\in \mathcal{A}_{ji}$ and  $d_{jj}\in \mathcal{A}_{jj}$ $(i\neq j;i,j=1,2)$ the following holds: (i) $\Phi (a_{ii}+b_{ij}+c_{ji})=\Phi (a_{ii})+\Phi (b_{ij})+\Phi (c_{ji})$ and (ii) $\Phi (b_{ij}+c_{ji}+d_{jj})=\Phi (b_{ij})+\Phi (c_{ji})+\Phi (d_{jj}).$
\end{claim}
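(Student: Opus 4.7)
I prove part (i) following the template of Claims~\ref{c24}--\ref{c27}: by surjectivity pick $f=f_{ii}+f_{ij}+f_{ji}+f_{jj}\in\mathcal{A}$ with $\Phi(f)=\Phi(a_{ii})+\Phi(b_{ij})+\Phi(c_{ji})$, which by Claim~\ref{c24}(i) also equals $\Phi(a_{ii}+b_{ij})+\Phi(c_{ji})$, and then identify each Peirce component of $f$ with its expected value.

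To pin down $f_{ij}$, $f_{ji}$, and $f_{jj}$, apply Claim~\ref{c22}(i) with $s=1_{\mathcal{A}}$, $t=p_j$, $c=f$, $a=a_{ii}+b_{ij}$, $b=c_{ji}$. A routine Peirce computation reduces the LHS inside $\Phi$ to $(\alpha_1+\alpha_2+\alpha_3)f_{ij}+(\alpha_4+\alpha_5+\alpha_6)f_{ji}+(\sum_{k=1}^{6}\alpha_k)f_{jj}$, while on the RHS the $(a_{ii}+b_{ij})$-contribution collapses to $(\alpha_1+\alpha_2+\alpha_3)b_{ij}\in\mathcal{A}_{ij}$ and the $c_{ji}$-contribution to $(\alpha_4+\alpha_5+\alpha_6)c_{ji}\in\mathcal{A}_{ji}$, so Claim~\ref{c25} merges the two $\Phi$'s into one. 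Injectivity, the coefficient swap provided by Claim~\ref{c21}, and addition yield $(\sum_{k=1}^{6}\alpha_k)(f_{ij}+f_{ji}+2f_{jj})=(\sum_{k=1}^{6}\alpha_k)(b_{ij}+c_{ji})$; cancelling $\sum_{k=1}^{6}\alpha_k\neq 0$ and reading off Peirce components gives $f_{ij}=b_{ij}$, $f_{ji}=c_{ji}$, $f_{jj}=0$.

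For $f_{ii}$ I reapply Claim~\ref{c22}(i), now with $s=1_{\mathcal{A}}$ and $t=t_{ji}\in\mathcal{A}_{ji}$ arbitrary. The key observation is that choosing $t_{ji}$ (rather than the more obvious $t_{ij}$) makes both $c_{ji}t_{ji}$ and $t_{ji}c_{ji}$ vanish, so the $b=c_{ji}$-contribution reduces to $\Phi(0)=0$ by Claim~\ref{c23} and only a single $\Phi$ survives on the RHS. Using the information $f=f_{ii}+b_{ij}+c_{ji}$ already obtained, the arguments of $\Phi$ on the two sides agree on their $\mathcal{A}_{ii}$- and $\mathcal{A}_{jj}$-parts and differ only in $\mathcal{A}_{ji}$ by the amount $(\alpha_4+\alpha_5+\alpha_6)t_{ji}(f_{ii}-a_{ii})$; injectivity, one more application of Claim~\ref{c21}, and addition yield $(\sum_{k=1}^{6}\alpha_k)t_{ji}(f_{ii}-a_{ii})=0$, so $t_{ji}(f_{ii}-a_{ii})=0$ for every $t_{ji}\in\mathcal{A}_{ji}$. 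Rewriting this as $p_j\mathcal{A}p_i(f_{ii}-a_{ii})=0$, which since $f_{ii}-a_{ii}=p_i(f_{ii}-a_{ii})p_i$ is the same as $p_j\mathcal{A}(f_{ii}-a_{ii})=0$, and invoking primeness of $\mathcal{A}$ together with $p_j\neq 0$ forces $f_{ii}=a_{ii}$. This completes (i).

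Part (ii) is immediate from (i) by interchanging the roles of $i$ and $j$: applying (i) with $i$ and $j$ swapped yields $\Phi(a_{jj}+b_{ji}+c_{ij})=\Phi(a_{jj})+\Phi(b_{ji})+\Phi(c_{ij})$, which is (ii) after renaming the variables. The principal delicacy is the choice of test element $t$ used to extract $f_{ii}$: the naive choice $t=t_{ij}$ would place an element of $\mathcal{A}_{ii}+\mathcal{A}_{jj}$ inside a $\Phi$ on the RHS, and none of the previous claims allows one to split $\Phi$ across two distinct diagonal blocks; using $t=t_{ji}$ annihilates the troublesome term and sidesteps the obstruction cleanly.
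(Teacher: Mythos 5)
Your proof is correct and follows essentially the same route as the paper's: the same test elements ($t=p_j$ to force $f_{ij}=b_{ij}$, $f_{ji}=c_{ji}$, $f_{jj}=0$, then $t=t_{ji}\in\mathcal{A}_{ji}$ together with Claim~\ref{c21}, addition, and primeness to get $f_{ii}=a_{ii}$), the only cosmetic difference being that you group the right-hand side as $\Phi (a_{ii}+b_{ij})+\Phi (c_{ji})$ throughout and merge the two resulting images via Claim~\ref{c25}, whereas the paper first uses the grouping $\Phi (a_{ii})+\Phi (b_{ij}+c_{ji})$ for the off-diagonal step. Your observation that (ii) is literally (i) with $i$ and $j$ interchanged is a valid small streamlining of the paper's ``entirely similar reasoning.''
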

\begin{proof} Take an element $f=f_{ii}+f_{ij}+f_{ji}+f_{jj}\in \mathcal{A}$ such that $\Phi (f)=\Phi (a_{ii})+\Phi (b_{ij})+\Phi (c_{ji})$ and write $\Phi (f)=\Phi (a_{ii})+\Phi (b_{ij}+c_{ji}),$ by Claim \ref{c25}. Then
{\allowdisplaybreaks\begin{align*}\allowdisplaybreaks
&\Phi (\alpha _{1} f1_{\mathcal{A}}^{*}p_{j}+\alpha _{2} fp_{j}1_{\mathcal{A}}^{*}+\alpha _{3} 1_{\mathcal{A}}^{*}fp_{j} +\alpha _{4} p_{j}f1_{\mathcal{A}}^{*}+\alpha _{5} 1_{\mathcal{A}}^{*}p_{j}f+\alpha _{6} p_{j}1_{\mathcal{A}}^{*}f)\\
&=\Phi (\alpha _{1} a_{ii}1_{\mathcal{A}}^{*}p_{j}+\alpha _{2} a_{ii}p_{j}1_{\mathcal{A}}^{*}+\alpha _{3} 1_{\mathcal{A}}^{*}a_{ii}p_{j}+\alpha _{4} p_{j}a_{ii}1_{\mathcal{A}}^{*}+\alpha _{5} 1_{\mathcal{A}}^{*}p_{j}a_{ii}+\alpha _{6} p_{j}1_{\mathcal{A}}^{*}a_{ii})\\
&+\Phi (\alpha _{1} (b_{ij}+c_{ji})1_{\mathcal{A}}^{*}p_{j}+\alpha _{2} (b_{ij}+c_{ji})p_{j}1_{\mathcal{A}}^{*}+\alpha _{3} 1_{\mathcal{A}}^{*}(b_{ij}+c_{ji})p_{j}\\
&+\alpha _{4} p_{j}(b_{ij}+c_{ji})1_{\mathcal{A}}^{*}+\alpha _{5} 1_{\mathcal{A}}^{*}p_{j}(b_{ij}+c_{ji})+\alpha _{6} p_{j}1_{\mathcal{A}}^{*}(b_{ij}+c_{ji}))\\
&=\Phi ((\alpha _{1}+\alpha _{2}+\alpha _{3})b_{ij}+(\alpha _{4}+\alpha _{5}+\alpha _{6})c_{ji}).
\end{align*}}
It follows directly from this that $\alpha _{1} f1_{\mathcal{A}}^{*}p_{j}+\alpha _{2} fp_{j}1_{\mathcal{A}}^{*}+\alpha _{3} 1_{\mathcal{A}}^{*}fp_{j} +\alpha _{4} p_{j}f1_{\mathcal{A}}^{*}+\alpha _{5} 1_{\mathcal{A}}^{*}p_{j}f+\alpha _{6} p_{j}1_{\mathcal{A}}^{*}f=(\alpha _{1}+\alpha _{2}+\alpha _{3})b_{ij}+(\alpha _{4}+\alpha _{5}+\alpha _{6})c_{ji}$ which results in $(\alpha _{1}+\alpha _{2}+\alpha _{3})f_{ij}+(\alpha _{4}+\alpha _{5}+\alpha _{6})f_{ji}+ (\sum _{k=1}^{6} \alpha _{k})f_{jj}=(\alpha _{1}+\alpha _{2}+\alpha _{3})b_{ij}+(\alpha _{4}+\alpha _{5}+\alpha _{6})c_{ji}.$ As a result, we can apply Claim \ref{c21} to conclude that $(\alpha _{6}+\alpha _{4}+\alpha _{5})f_{ij}+(\alpha _{2}+\alpha _{3}+\alpha _{1})f_{ji}+ (\sum _{k=1}^{6} \alpha _{k})f_{jj}=(\alpha _{6}+\alpha _{4}+\alpha _{5})b_{ij}+(\alpha _{2}+\alpha _{3}+\alpha _{1})c_{ji}.$ Adding these two identities we obtain $(\sum _{k=1}^{6} \alpha _{k})(f_{ij}+f_{ji}+2f_{jj})=(\sum _{k=1}^{6} \alpha _{k})(b_{ij}+c_{ji})$ which shows that $f_{ij}=b_{ij},$ $f_{ji}=c_{ji}$ and $f_{jj}=0.$ Next, write $\Phi (f)=\Phi (a_{ii}+b_{ij})+\Phi (c_{ji}),$ by Claim \ref{c24}(i). For an arbitrary element $t_{ji}\in \mathcal{A}_{ji},$ we have 
{\allowdisplaybreaks\begin{align*}\allowdisplaybreaks
&\Phi (\alpha _{1} f1_{\mathcal{A}}^{*}t_{ji}+\alpha _{2} ft_{ji}1_{\mathcal{A}}^{*}+\alpha _{3} 1_{\mathcal{A}}^{*}ft_{ji} +\alpha _{4} t_{ji}f1_{\mathcal{A}}^{*}+\alpha _{5} 1_{\mathcal{A}}^{*}t_{ji}f+\alpha _{6} t_{ji}1_{\mathcal{A}}^{*}f)\\
&=\Phi (\alpha _{1} (a_{ii}+b_{ij})1_{\mathcal{A}}^{*}t_{ji}+\alpha _{2} (a_{ii}+b_{ij})t_{ji}1_{\mathcal{A}}^{*}+\alpha _{3} 1_{\mathcal{A}}^{*}(a_{ii}+b_{ij})t_{ji}\\
&+\alpha _{4} t_{ji}(a_{ii}+b_{ij})1_{\mathcal{A}}^{*}+\alpha _{5} 1_{\mathcal{A}}^{*}t_{ji}(a_{ii}+b_{ij})+\alpha _{6} t_{ji}1_{\mathcal{A}}^{*}(a_{ii}+b_{ij}))\\
&+\Phi (\alpha _{1} c_{ji}1_{\mathcal{A}}^{*}t_{ji}+\alpha _{2} c_{ji}t_{ji}1_{\mathcal{A}}^{*}+\alpha _{3} 1_{\mathcal{A}}^{*}c_{ji}t_{ji}+\alpha _{4} t_{ji}c_{ji}1_{\mathcal{A}}^{*}+\alpha _{5} 1_{\mathcal{A}}^{*}t_{ji}c_{ji}+\alpha _{6} t_{ji}1_{\mathcal{A}}^{*}c_{ji})\\
&=\Phi ((\alpha _{1}+\alpha _{2}+\alpha _{3})b_{ij}t_{ji}+(\alpha _{4}+\alpha _{5}+\alpha _{6})t_{ji}a_{ii}+(\alpha _{4}+\alpha _{5}+\alpha _{6})t_{ji}b_{ij}).
\end{align*}}
This implies that $\alpha _{1} f1_{\mathcal{A}}^{*}t_{ji}+\alpha _{2} ft_{ji}1_{\mathcal{A}}^{*}+\alpha _{3} 1_{\mathcal{A}}^{*}ft_{ji} +\alpha _{4} t_{ji}f1_{\mathcal{A}}^{*}+\alpha _{5} 1_{\mathcal{A}}^{*}t_{ji}f+\alpha _{6} t_{ji}1_{\mathcal{A}}^{*}f=(\alpha _{1}+\alpha _{2}+\alpha _{3})b_{ij}t_{ji}+(\alpha _{4}+\alpha _{5}+\alpha _{6})t_{ji}a_{ii}+(\alpha _{4}+\alpha _{5}+\alpha _{6})t_{ji}b_{ij}$ which results in $(\alpha _{4}+\alpha _{5}+\alpha _{6})t_{ji}f_{ii}=(\alpha _{4}+\alpha _{5}+\alpha _{6})t_{ji}a_{ii}.$ Now we can apply Claim \ref{c21} to conclude that $(\alpha _{2}+\alpha _{3}+\alpha _{1})t_{ji}f_{ii}=(\alpha _{2}+\alpha _{3}+\alpha _{1})t_{ji}a_{ii}.$ Thus, adding these two last identities we get $(\sum _{k=1}^{6} \alpha _{k})t_{ji}a_{ii}=(\sum _{k=1}^{6} \alpha _{k})t_{ji}a_{ii}$ which leads to $t_{ji}f_{ii}=t_{ji}a_{ii}.$ Therefore, $f_{ii}=a_{ii}.$

By an entirely similar reasoning, we prove the case (ii).
\end{proof}

\begin{claim}\label{c29}  For arbitrary elements $a_{ii}\in \mathcal{A}_{ii}$, $b_{ij}\in \mathcal{A}_{ij}$, $c_{ji}\in \mathcal{A}_{ji}$ and  $d_{jj}\in \mathcal{A}_{jj}$ $(i\neq j;i,j=1,2)$ the following holds $\Phi (a_{ii}+b_{ij}+c_{ji}+d_{jj})=\Phi (a_{ii})+\Phi (b_{ij})+\Phi (c_{ji})+\Phi (d_{jj}).$
\end{claim}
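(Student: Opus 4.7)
The plan is to mimic exactly the strategy that was used in Claims \ref{c24}, \ref{c25}, \ref{c27} and \ref{c28}: take an arbitrary preimage $f=f_{ii}+f_{ij}+f_{ji}+f_{jj}$ satisfying $\Phi(f)=\Phi(a_{ii})+\Phi(b_{ij})+\Phi(c_{ji})+\Phi(d_{jj})$, apply the sum-of-triple-products identity with one argument running over $\{1_{\mathcal{A}},p_j,t_{ij}\}$ so as to kill most Peirce components, and then peel off each $f_{kl}$ using Claim~\ref{c21} plus the running assumption $\sum_{k=1}^{6}\alpha_k\neq 0$.

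\textbf{Step 1 (extracting $f_{ij}$, $f_{ji}$, $f_{jj}$).} Using Claim~\ref{c28}(i) we may rewrite $\Phi(f)=\Phi(a_{ii}+b_{ij}+c_{ji})+\Phi(d_{jj})$. Now I would feed $s=1_{\mathcal{A}}$, $t=p_j$ into Claim~\ref{c22}(i). On the right side the only surviving terms are $(\alpha_1+\alpha_2+\alpha_3)b_{ij}+(\alpha_4+\alpha_5+\alpha_6)c_{ji}+\bigl(\sum_{k=1}^{6}\alpha_k\bigr)d_{jj}$, because $a_{ii}p_j=p_j a_{ii}=0$ while $d_{jj}p_j=p_j d_{jj}=d_{jj}$. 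On the left, the same combination applied to $f$ produces $(\alpha_1+\alpha_2+\alpha_3)f_{ij}+(\alpha_4+\alpha_5+\alpha_6)f_{ji}+\bigl(\sum_{k=1}^{6}\alpha_k\bigr)f_{jj}$. Injectivity of $\Phi$ gives an equality between these two expressions; invoking Claim~\ref{c21} swaps the $(\alpha_1,\alpha_2,\alpha_3)$ and $(\alpha_4,\alpha_5,\alpha_6)$ triples, and adding the two identities yields $\bigl(\sum_{k=1}^{6}\alpha_k\bigr)(f_{ij}+f_{ji}+2f_{jj})=\bigl(\sum_{k=1}^{6}\alpha_k\bigr)(b_{ij}+c_{ji}+2d_{jj})$. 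Since $\sum_{k=1}^{6}\alpha_k\neq 0$, comparing Peirce components forces $f_{ij}=b_{ij}$, $f_{ji}=c_{ji}$ and $f_{jj}=d_{jj}$.

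\textbf{Step 2 (extracting $f_{ii}$).} Using Claim~\ref{c28}(ii) instead, I rewrite $\Phi(f)=\Phi(a_{ii})+\Phi(b_{ij}+c_{ji}+d_{jj})$, and then apply Claim~\ref{c22}(i) with $s=1_{\mathcal{A}}$ and a free $t=t_{ij}\in\mathcal{A}_{ij}$. Using the Peirce multiplication rules, the right-hand side simplifies to $\Phi\bigl((\alpha_1+\alpha_2+\alpha_3)(a_{ii}+c_{ji})t_{ij}+(\alpha_4+\alpha_5+\alpha_6)t_{ij}(c_{ji}+d_{jj})\bigr)$, while the left-hand side gives the same expression with $(a_{ii},c_{ji},d_{jj})$ replaced by $(f_{ii},f_{ji},f_{jj})$. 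Since Step~1 already gives $f_{ji}=c_{ji}$ and $f_{jj}=d_{jj}$, everything cancels except $(\alpha_1+\alpha_2+\alpha_3)f_{ii}t_{ij}=(\alpha_1+\alpha_2+\alpha_3)a_{ii}t_{ij}$. Applying Claim~\ref{c21} once more and adding the resulting identities produces $\bigl(\sum_{k=1}^{6}\alpha_k\bigr)(f_{ii}-a_{ii})t_{ij}=0$, so $(f_{ii}-a_{ii})t_{ij}=0$ for every $t_{ij}\in\mathcal{A}_{ij}$.

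\textbf{Step 3 (ruling out $f_{ii}\neq a_{ii}$).} The main obstacle is not algebraic manipulation but the transition from $(f_{ii}-a_{ii})\mathcal{A}_{ij}=0$ to $f_{ii}=a_{ii}$. Here I would use primeness of $\mathcal{A}$: writing $r=f_{ii}-a_{ii}\in\mathcal{A}_{ii}$, the conclusion of Step~2 means $r\mathcal{A}p_j=0$, i.e. $r\mathcal{A}p_j\mathcal{A}=0$, so by primeness $r=0$ (since $p_j\neq 0$). Thus $f_{ii}=a_{ii}$, which together with Step~1 gives $f=a_{ii}+b_{ij}+c_{ji}+d_{jj}$ and hence the desired additivity.
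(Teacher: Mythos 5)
Your overall skeleton (preimage $f$, Claim~\ref{c28} decompositions, projection/element multipliers, the Claim~\ref{c21} symmetrization trick, primeness at the end) is the paper's strategy, and Step~3 is fine --- indeed more explicit than the paper about where primeness enters. But there is a recurring gap in how you handle the right-hand side of Claim~\ref{c22}(i): that claim produces a \emph{sum of two values} $\Phi(E(a_{ii}+b_{ij}+c_{ji}))+\Phi(E(d_{jj}))$ (writing $E$ for the six-term expression), not $\Phi$ of the sum, and you silently merge them into a single $\Phi(\cdots)$ before invoking injectivity. In Step~1 this is repairable: with $t=p_j$ the two arguments are $(\alpha_1+\alpha_2+\alpha_3)b_{ij}+(\alpha_4+\alpha_5+\alpha_6)c_{ji}$ and $(\sum_k\alpha_k)d_{jj}$, which live in $\mathcal{A}_{ij}\oplus\mathcal{A}_{ji}$ and $\mathcal{A}_{jj}$ respectively, so Claims~\ref{c25} and~\ref{c28}(ii) legitimately recombine them --- but you should say so. In Step~2 the gap is genuine: $E(a_{ii})=(\alpha_1+\alpha_2+\alpha_3)a_{ii}t_{ij}\in\mathcal{A}_{ij}$ while $E(b_{ij}+c_{ji}+d_{jj})$ has components in $\mathcal{A}_{ii}$, $\mathcal{A}_{ij}$ and $\mathcal{A}_{jj}$ simultaneously, so merging the two $\Phi$-values would require additivity on sums of the form $x_{ii}+y_{ij}+z_{jj}$, a case that is \emph{not} covered by Claims~\ref{c24}--\ref{c28} (Claim~\ref{c28} only treats $ii+ij+ji$ and $ij+ji+jj$) and is essentially part of what Claim~\ref{c29} is meant to establish. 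As written, Step~2 is circular.

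The repair is exactly the paper's device: always choose the multiplier so that one of the two summands is annihilated, making the right-hand side a single $\Phi$ via Claim~\ref{c23}. Concretely, take the decomposition $\Phi(f)=\Phi(a_{ii}+b_{ij}+c_{ji})+\Phi(d_{jj})$ and multiply by $t=p_i$: then $E(d_{jj})=0$, and $E(a_{ii}+b_{ij}+c_{ji})=(\sum_k\alpha_k)a_{ii}+(\alpha_4+\alpha_5+\alpha_6)b_{ij}+(\alpha_1+\alpha_2+\alpha_3)c_{ji}$, so one application of injectivity plus the Claim~\ref{c21} symmetrization already yields $f_{ii}=a_{ii}$, $f_{ij}=b_{ij}$, $f_{ji}=c_{ji}$ in one stroke --- no free $t_{ij}$ and no primeness are needed for this claim. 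Then the decomposition $\Phi(f)=\Phi(a_{ii})+\Phi(b_{ij}+c_{ji}+d_{jj})$ with $t=p_j$ kills the $a_{ii}$ summand and gives $f_{jj}=d_{jj}$. With Step~2 replaced along these lines your argument closes; as submitted it does not.
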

\begin{proof} Consider an element $f=f_{ii}+f_{ij}+f_{ji}+f_{jj}\in \mathcal{A}$ such that $\Phi (f)=\Phi (a_{ii})+\Phi (b_{ij})+\Phi (c_{ji})+\Phi (d_{jj})$ and write $\Phi (f)=\Phi (a_{ii}+b_{ij}+c_{ji})+\Phi (d_{jj})$, by Claim \ref{c28}(i). By Claim \ref{c25} we have
{\allowdisplaybreaks\begin{align*}\allowdisplaybreaks
&\Phi (\alpha _{1} f1_{\mathcal{A}}^{*}p_{i}+\alpha _{2} fp_{i}1_{\mathcal{A}}^{*}+\alpha _{3} 1_{\mathcal{A}}^{*}fp_{i} +\alpha _{4} p_{i}f1_{\mathcal{A}}^{*}+\alpha _{5} 1_{\mathcal{A}}^{*}p_{i}f+\alpha _{6} p_{i}1_{\mathcal{A}}^{*}f)\\
&=\Phi (\alpha _{1} (a_{ii}+b_{ij}+c_{ji})1_{\mathcal{A}}^{*}p_{i}+\alpha _{2} (a_{ii}+b_{ij}+c_{ji})p_{i}1_{\mathcal{A}}^{*}+\alpha _{3} 1_{\mathcal{A}}^{*}(a_{ii}+b_{ij}+c_{ji})p_{i}\\
&+\alpha _{4} p_{i}(a_{ii}+b_{ij}+c_{ji})1_{\mathcal{A}}^{*}+\alpha _{5} 1_{\mathcal{A}}^{*}p_{i}(a_{ii}+b_{ij}+c_{ji})+\alpha _{6} p_{i}1_{\mathcal{A}}^{*}(a_{ii}+b_{ij}+c_{ji}))\\
&+\Phi (\alpha _{1} d_{jj}1_{\mathcal{A}}^{*}p_{i}+\alpha _{2} d_{jj}p_{i}1_{\mathcal{A}}^{*}+\alpha _{3} 1_{\mathcal{A}}^{*}d_{jj}p_{i}+\alpha _{4} p_{i}d_{jj}1_{\mathcal{A}}^{*}+\alpha _{5} 1_{\mathcal{A}}^{*}p_{i}d_{jj}+\alpha _{6} p_{i}1_{\mathcal{A}}^{*}d_{jj})\\
&=\Phi ((\textstyle \sum _{k=1}^{6} \alpha _{k})a_{ii}+(\alpha _{4}+\alpha _{5}+\alpha _{6})b_{ij}+(\alpha _{1}+\alpha _{2}+\alpha _{3})c_{ji}).
\end{align*}}
It follows that $\alpha _{1} f1_{\mathcal{A}}^{*}p_{i}+\alpha _{2} fp_{i}1_{\mathcal{A}}^{*}+\alpha _{3} 1_{\mathcal{A}}^{*}fp_{i} +\alpha _{4} p_{i}f1_{\mathcal{A}}^{*}+\alpha _{5} 1_{\mathcal{A}}^{*}p_{i}f+\alpha _{6} p_{i}1_{\mathcal{A}}^{*}f=(\sum _{k=1}^{6} \alpha _{k})a_{ii}+(\alpha _{4}+\alpha _{5}+\alpha _{6})b_{ij}+(\alpha _{1}+\alpha _{2}+\alpha _{3})c_{ji}$ which implies that $(\sum _{k=1}^{6} \alpha _{k})f_{ii}+(\alpha _{4}+\alpha _{5}+\alpha _{6})f_{ij}+(\alpha _{1}+\alpha _{2}+\alpha _{3})f_{ji}=(\sum _{k=1}^{6} \alpha _{k})a_{ii}+(\alpha _{4}+\alpha _{5}+\alpha _{6})b_{ij}+(\alpha _{1}+\alpha _{2}+\alpha _{3})c_{ji}$ and in view of Claim \ref{c21} we arrive at $(\sum _{k=1}^{6} \alpha _{k})f_{ii}+(\alpha _{2}+\alpha _{3}+\alpha _{1})f_{ij}+(\alpha _{6}+\alpha _{4}+\alpha _{5})f_{ji}=(\sum _{k=1}^{6} \alpha _{k})a_{ii}+(\alpha _{2}+\alpha _{3}+\alpha _{1})b_{ij}+(\alpha _{6}+\alpha _{4}+\alpha _{5})c_{ji}.$ Adding these two identities we have $(\sum _{k=1}^{6} \alpha _{k})(2f_{ii}+f_{ij}+f_{ji})=(\sum _{k=1}^{6} \alpha _{k})(2a_{ii}+b_{ij}+c_{ji})$ which shows that $f_{ii}=a_{ii},$ $f_{ij}=b_{ij}$ and $f_{ji}=c_{ji}.$  Using a similar reasoning as before, we prove that $f_{jj}=d_{jj}.$
\end{proof}

\begin{claim}\label{c210} $\Phi $ is an additive mapping.
\end{claim}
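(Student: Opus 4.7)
The plan is to reduce the general identity $\Phi(a+b)=\Phi(a)+\Phi(b)$ to the special cases already established on Peirce components. Fix arbitrary $a,b\in\mathcal{A}$ and write their Peirce decompositions
\[
a=a_{11}+a_{12}+a_{21}+a_{22},\qquad b=b_{11}+b_{12}+b_{21}+b_{22},
\]
with $a_{ij},b_{ij}\in\mathcal{A}_{ij}$. Then
\[
a+b=(a_{11}+b_{11})+(a_{12}+b_{12})+(a_{21}+b_{21})+(a_{22}+b_{22}),
\]
where each summand lies in the corresponding $\mathcal{A}_{ij}$.

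First I would apply Claim \ref{c29} to this grouped decomposition of $a+b$, obtaining
\[
\Phi(a+b)=\Phi(a_{11}+b_{11})+\Phi(a_{12}+b_{12})+\Phi(a_{21}+b_{21})+\Phi(a_{22}+b_{22}).
\]
Next I would break each of these four terms using the component-wise additivity already proved: Claim \ref{c27} handles the two diagonal terms $\Phi(a_{ii}+b_{ii})=\Phi(a_{ii})+\Phi(b_{ii})$ ($i=1,2$), while Claim \ref{c26} handles the two off-diagonal terms $\Phi(a_{ij}+b_{ij})=\Phi(a_{ij})+\Phi(b_{ij})$ ($i\neq j$). Substituting these gives the eight-term expansion
\[
\Phi(a+b)=\sum_{i,j=1}^{2}\Phi(a_{ij})+\sum_{i,j=1}^{2}\Phi(b_{ij}).
\]

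Finally, one more application of Claim \ref{c29} to $a=a_{11}+a_{12}+a_{21}+a_{22}$ and to $b=b_{11}+b_{12}+b_{21}+b_{22}$ yields
\[
\Phi(a)=\sum_{i,j=1}^{2}\Phi(a_{ij}),\qquad \Phi(b)=\sum_{i,j=1}^{2}\Phi(b_{ij}),
\]
so combining these with the previous display produces $\Phi(a+b)=\Phi(a)+\Phi(b)$. Since $a,b$ were arbitrary, $\Phi$ is additive.

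There is no genuine obstacle remaining at this stage: all the hard work has been done in Claims \ref{c24}--\ref{c29}. The only thing to be careful about is the order in which the grouping and ungrouping are performed, so that every invocation of Claim \ref{c29} is applied to a sum of four elements drawn one from each Peirce subspace, and every invocation of Claims \ref{c26} and \ref{c27} is applied to a sum of two elements drawn from the same Peirce subspace; the displayed regrouping above arranges exactly this.
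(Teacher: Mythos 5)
Your argument is correct and is precisely the intended one: the paper's own proof consists of the single sentence that the claim is a direct consequence of Claims \ref{c26}, \ref{c27} and \ref{c29}, and your write-up simply makes explicit the grouping of Peirce components and the order in which those three claims are invoked. No gap, and nothing genuinely different from the paper's route.
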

\begin{proof} The result is a direct consequence of Claims \ref{c26}, \ref{c27} and \ref{c29}.
\end{proof}

To prove the second part of the Theorem \ref{thm21}, we assume that the element $\Phi (1_{\mathcal{A}})$ is a projection of $\mathcal{B}.$

\begin{claim}\label{c211} (i) $\Phi (1_{\mathcal{A}})=1_{\mathcal{B}},$ (ii) $\Phi ((\sum _{k=1}^{6} \alpha _{k})a)=(\sum _{k=1}^{6} \alpha _{k})\Phi (a),$ for all element $a\in \mathcal{A},$ and (iii) $\Phi (b^{*})=\Phi (b)^{*},$ for all element $b\in \mathcal{A}.$  
\end{claim}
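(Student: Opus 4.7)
\textbf{Proof plan for Claim \ref{c211}.} The plan is to first establish (i), from which (ii) and (iii) drop out by specializing \eqref{fundident} with two of the three arguments equal to $1_{\mathcal{A}}$. Write $p=\Phi(1_{\mathcal{A}})$, so by hypothesis $p^{*}=p$ and $p^{2}=p$. The central computation is to put $b=c=1_{\mathcal{A}}$ in \eqref{fundident}: since $1_{\mathcal{A}}^{*}=1_{\mathcal{A}}$ the left side collapses to $\Phi((\textstyle\sum_{k=1}^{6}\alpha_{k})a)$, while the right side, using $p^{*}=p$ and $p^{2}=p$ on each of the six monomials, reduces to
\[
(\alpha_{1}+\alpha_{2})\Phi(a)p+(\alpha_{3}+\alpha_{4})p\Phi(a)p+(\alpha_{5}+\alpha_{6})p\Phi(a). \tag{$\star$}
\]

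To prove (i), I would use surjectivity of $\Phi$ to pick $e\in\mathcal{A}$ with $\Phi(e)=1_{\mathcal{B}}$ and then evaluate $(\star)$ at $a=1_{\mathcal{A}}$ and at $a=e$. In both cases the right side collapses to $(\sum_{k=1}^{6}\alpha_{k})p$, so $\Phi((\sum_{k=1}^{6}\alpha_{k})\cdot 1_{\mathcal{A}})=\Phi((\sum_{k=1}^{6}\alpha_{k})e)$. Since $\Phi$ is injective and $\sum_{k=1}^{6}\alpha_{k}$ is a nonzero complex scalar acting on the $\mathbb{C}$-algebra $\mathcal{A}$, cancellation forces $e=1_{\mathcal{A}}$, whence $\Phi(1_{\mathcal{A}})=1_{\mathcal{B}}$.

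For (ii), once $p=1_{\mathcal{B}}$ the identity $(\star)$ simplifies immediately to $\Phi((\sum_{k=1}^{6}\alpha_{k})a)=(\sum_{k=1}^{6}\alpha_{k})\Phi(a)$ for every $a\in\mathcal{A}$. For (iii), I would substitute $a=c=1_{\mathcal{A}}$ in \eqref{fundident}: the left side becomes $\Phi((\sum_{k=1}^{6}\alpha_{k})b^{*})$ and the right side, after using $\Phi(1_{\mathcal{A}})=1_{\mathcal{B}}$, collapses to $(\sum_{k=1}^{6}\alpha_{k})\Phi(b)^{*}$. Combining with (ii) applied to $b^{*}$ gives $(\sum_{k=1}^{6}\alpha_{k})\Phi(b^{*})=(\sum_{k=1}^{6}\alpha_{k})\Phi(b)^{*}$, and cancelling the nonzero scalar yields $\Phi(b^{*})=\Phi(b)^{*}$.

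The only real obstacle is (i); everything else is a straightforward specialization of \eqref{fundident}. The trick in (i) is the use of bijectivity combined with the assumption $\sum_{k=1}^{6}\alpha_{k}\neq 0$, which is what allows us to pull $\Phi(1_{\mathcal{A}})=1_{\mathcal{B}}$ out of the fact that both $1_{\mathcal{A}}$ and the preimage $e$ of $1_{\mathcal{B}}$ produce the same value on the right side of $(\star)$.
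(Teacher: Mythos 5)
Your proposal is correct and follows essentially the same route as the paper: exploit the projection property of $\Phi(1_{\mathcal{A}})$, compare the value of the identity at $1_{\mathcal{A}}$ with its value at the preimage of $1_{\mathcal{B}}$, and use injectivity together with $\sum_{k=1}^{6}\alpha_{k}\neq 0$ to force that preimage to be $1_{\mathcal{A}}$; parts (ii) and (iii) then follow by the same specializations the paper uses. The only cosmetic difference is that you place the preimage of $1_{\mathcal{B}}$ in the $a$-slot (with $b=c=1_{\mathcal{A}}$) rather than in the starred $b$-slot, which merely spares you the paper's final step of passing from $b^{*}=1_{\mathcal{A}}$ to $b=1_{\mathcal{A}}$.
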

\begin{proof} First, we observe that 
{\allowdisplaybreaks\begin{align*}\allowdisplaybreaks
&\Phi ((\textstyle \sum _{k=1}^{6} \alpha _{k})1_{\mathcal{A}})=\Phi (\alpha _{1} 1_{\mathcal{A}}1_{\mathcal{A}}^{*}1_{\mathcal{A}}+\alpha _{2} 1_{\mathcal{A}}1_{\mathcal{A}}1_{\mathcal{A}}^{*}+\alpha _{3} 1_{\mathcal{A}}^{*}1_{\mathcal{A}}1_{\mathcal{A}} +\alpha _{4} 1_{\mathcal{A}}1_{\mathcal{A}}1_{\mathcal{A}}^{*}\\
&+\alpha _{5} 1_{\mathcal{A}}^{*}1_{\mathcal{A}}1_{\mathcal{A}}+\alpha _{6} 1_{\mathcal{A}}1_{\mathcal{A}}^{*}1_{\mathcal{A}})=\alpha _{1} \Phi (1_{\mathcal{A}})\Phi (1_{\mathcal{A}})^{*}\Phi (1_{\mathcal{A}})+\alpha _{2} \Phi (1_{\mathcal{A}})\Phi (1_{\mathcal{A}})\Phi (1_{\mathcal{A}})^{*}\\
&+\alpha _{3} \Phi (1_{\mathcal{A}})^{*}\Phi (1_{\mathcal{A}})\Phi (1_{\mathcal{A}})+\alpha _{4} \Phi (1_{\mathcal{A}})\Phi (1_{\mathcal{A}})\Phi (1_{\mathcal{A}})^{*}+\alpha _{5} \Phi (1_{\mathcal{A}})^{*}\Phi (1_{\mathcal{A}})\Phi (1_{\mathcal{A}})\\
&+\alpha _{6}\Phi (1_{\mathcal{A}})\Phi (1_{\mathcal{A}})^{*}\Phi (1_{\mathcal{A}})=(\textstyle \sum _{k=1}^{6} \alpha _{k})\Phi (1_{\mathcal{A}}).
\end{align*}}
Thus, if $b\in \mathcal{A}$ is an element such that $\Phi (b)=1_{\mathcal{B}},$ then 
{\allowdisplaybreaks\begin{align*}\allowdisplaybreaks
&\Phi ((\textstyle \sum _{k=1}^{6} \alpha _{k})b^{*})=\Phi (\alpha _{1} 1_{\mathcal{A}}b^{*}1_{\mathcal{A}}+\alpha _{2} 1_{\mathcal{A}}1_{\mathcal{A}}b^{*}+\alpha _{3} b^{*}1_{\mathcal{A}}1_{\mathcal{A}} +\alpha _{4} 1_{\mathcal{A}}1_{\mathcal{A}}b^{*}\\
&+\alpha _{5} b^{*}1_{\mathcal{A}}1_{\mathcal{A}}+\alpha _{6} 1_{\mathcal{A}}b^{*}1_{\mathcal{A}})=\alpha _{1} \Phi (1_{\mathcal{A}})\Phi (b)^{*}\Phi (1_{\mathcal{A}})+\alpha _{2} \Phi (1_{\mathcal{A}})\Phi (1_{\mathcal{A}})\Phi (b)^{*}\\
&+\alpha _{3} \Phi (b)^{*}\Phi (1_{\mathcal{A}})\Phi (1_{\mathcal{A}})+\alpha _{4} \Phi (1_{\mathcal{A}})\Phi (1_{\mathcal{A}})\Phi (b)^{*}+\alpha _{5} \Phi (b)^{*}\Phi (1_{\mathcal{A}})\Phi (1_{\mathcal{A}})\\
&+\alpha _{6}\Phi (1_{\mathcal{A}})\Phi (b)^{*}\Phi (1_{\mathcal{A}})=(\textstyle \sum _{k=1}^{6} \alpha _{k}) \Phi (1_{\mathcal{A}})=\Phi ((\textstyle \sum _{k=1}^{6} \alpha _{k})1_{\mathcal{A}}).
\end{align*}}
This shows that $b^{*}=1_{\mathcal{A}}$ which leads to $b=1_{\mathcal{A}}.$ For this reason, for an arbitrary element $a\in \mathcal{A},$ replace $b$ and $c$ by $1_{\mathcal{A}},$ respectively, in (\ref{fundident}). Then  
{\allowdisplaybreaks\begin{align*}\allowdisplaybreaks
&\Phi ((\textstyle \sum _{k=1}^{6} \alpha _{k})a)=\Phi (\alpha _{1} a1_{\mathcal{A}}^{*}1_{\mathcal{A}}+\alpha _{2} a1_{\mathcal{A}}1_{\mathcal{A}}^{*}+\alpha _{3} 1_{\mathcal{A}}^{*}a1_{\mathcal{A}} +\alpha _{4} 1_{\mathcal{A}}a1_{\mathcal{A}}^{*}+\alpha _{5} 1_{\mathcal{A}}^{*}1_{\mathcal{A}}a\\
&+\alpha _{6} 1_{\mathcal{A}}1_{\mathcal{A}}^{*}a)=\alpha _{1}\Phi (a)\Phi (1_{\mathcal{A}})^{*}\Phi (1_{\mathcal{A}})+\alpha _{2}\Phi (a)\Phi (1_{\mathcal{A}})\Phi (1_{\mathcal{A}})^{*}\\
&+\alpha _{3}\Phi (1_{\mathcal{A}})^{*}\Phi (a)\Phi (1_{\mathcal{A}})+\alpha _{4}\Phi (1_{\mathcal{A}})\Phi (a)\Phi (1_{\mathcal{A}})^{*}+\alpha _{5}\Phi (1_{\mathcal{A}})^{*}\Phi (1_{\mathcal{A}})\Phi (a)\\
&+\alpha _{6}\Phi (1_{\mathcal{A}})\Phi (1_{\mathcal{A}})^{*}\Phi (a)=(\textstyle \sum _{k=1}^{6} \alpha _{k})\Phi (a).
\end{align*}}
Moreover, for an arbitrary element $b\in \mathcal{A},$ replace $a$ and $c$ by $1_{\mathcal{A}},$ respectively, in (\ref{fundident}). Then we arrive at $\Phi (b^{*})=\Phi (b)^{*}.$ 
\end{proof}

\begin{claim}\label{c212} $\Phi $ is a $\ast $-Jordan multiplicative mapping.
\end{claim}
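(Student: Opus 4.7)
The plan is to deduce the Jordan multiplicativity of $\Phi $ from the fundamental identity (\ref{fundident}) by specializing $b=1_{\mathcal{A}}.$ Since $1_{\mathcal{A}}^{*}=1_{\mathcal{A}}$ and $\Phi (1_{\mathcal{A}})=1_{\mathcal{B}}$ by Claim \ref{c211}(i), the six triple products on both sides collapse: each of the terms indexed $1,2,3$ reduces to a multiple of $ac$ (respectively $\Phi (a)\Phi (c)$), while those indexed $4,5,6$ reduce to a multiple of $ca$ (respectively $\Phi (c)\Phi (a)$). Setting $p=\alpha _{1}+\alpha _{2}+\alpha _{3}$ and $q=\alpha _{4}+\alpha _{5}+\alpha _{6},$ this yields
\begin{equation*}
\Phi (p\,ac+q\,ca)=p\,\Phi (a)\Phi (c)+q\,\Phi (c)\Phi (a)
\end{equation*}
for all $a,c\in \mathcal{A}.$

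Next I would apply Claim \ref{c21}: since $\Phi $ also preserves the sum of triple products with the permuted coefficient vector $(\alpha _{6},\alpha _{4},\alpha _{5},\alpha _{2},\alpha _{3},\alpha _{1}),$ the same specialization $b=1_{\mathcal{A}}$ delivers the companion identity
\begin{equation*}
\Phi (q\,ac+p\,ca)=q\,\Phi (a)\Phi (c)+p\,\Phi (c)\Phi (a).
\end{equation*}
Adding these two identities and invoking the additivity of $\Phi $ (Claim \ref{c210}) on the left-hand side then produces
\begin{equation*}
\Phi \bigl((p+q)(ac+ca)\bigr)=(p+q)\bigl(\Phi (a)\Phi (c)+\Phi (c)\Phi (a)\bigr).
\end{equation*}

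Since $p+q=\sum _{k=1}^{6}\alpha _{k}\neq 0,$ Claim \ref{c211}(ii) lets me rewrite the left-hand side as $(p+q)\Phi (ac+ca),$ and cancelling the nonzero complex scalar $p+q$ (permissible because $\mathcal{B}$ is a complex $\ast $-algebra) gives $\Phi (ac+ca)=\Phi (a)\Phi (c)+\Phi (c)\Phi (a)$ for all $a,c\in \mathcal{A}.$ Combined with $\Phi (b^{*})=\Phi (b)^{*}$ from Claim \ref{c211}(iii), this is precisely the statement that $\Phi $ is a $\ast $-Jordan multiplicative mapping. I do not anticipate any serious obstacle here: the heart of the argument is an evaluation of (\ref{fundident}) at $b=1_{\mathcal{A}},$ and the symmetrization via Claim \ref{c21} is exactly what is needed to balance the coefficients of $ac$ and $ca$ so that the combination $ac+ca$ emerges. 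The only mild subtlety is ensuring that the scalar $p+q$ can be cancelled on both sides, which is immediate in a complex $\ast $-algebra.
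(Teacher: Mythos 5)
Your proposal is correct and follows essentially the same route as the paper: specialize $b=1_{\mathcal{A}}$ in (\ref{fundident}) to get the identity with coefficients $p=\alpha _{1}+\alpha _{2}+\alpha _{3}$ and $q=\alpha _{4}+\alpha _{5}+\alpha _{6}$, symmetrize via Claim \ref{c21}, add the two identities using additivity, and cancel $p+q=\sum _{k=1}^{6}\alpha _{k}\neq 0$ with the help of Claim \ref{c211}(ii), finishing with Claim \ref{c211}(iii). These are exactly the paper's identities (\ref{ident05})--(\ref{ident07}).
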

\begin{proof} For arbitrary elements $a,c\in \mathcal{A}$ we have
{\allowdisplaybreaks\begin{align*}\allowdisplaybreaks
&\Phi (\alpha _{1} a1_{\mathcal{A}}^{*}c+\alpha _{2} ac1_{\mathcal{A}}^{*}+\alpha _{3} 1_{\mathcal{A}}^{*}ac +\alpha _{4} ca1_{\mathcal{A}}^{*}+\alpha _{5} 1_{\mathcal{A}}^{*}ca+\alpha _{6} c1_{\mathcal{A}}^{*}a)\\
&=\alpha _{1}\Phi (a)\Phi (1_{\mathcal{A}})^{*}\Phi (c)+\alpha _{2}\Phi (a)\Phi (c)\Phi (1_{\mathcal{A}})^{*}+\alpha _{3}\Phi (1_{\mathcal{A}})^{*}\Phi (a)\Phi (c)\\
&+\alpha _{4}\Phi (c)\Phi (a)\Phi (1_{\mathcal{A}})^{*}+\alpha _{5}\Phi (1_{\mathcal{A}})^{*}\Phi (c)\Phi (a)+\alpha _{6}\Phi (c)\Phi (1_{\mathcal{A}})^{*}\Phi (a)
\end{align*}}
which results that
{\allowdisplaybreaks\begin{align}\allowdisplaybreaks\label{ident05}
&\Phi ((\alpha _{1}+\alpha _{2}+\alpha _{3}) ac +(\alpha _{4} +\alpha _{5}+\alpha _{6}) ca)\nonumber\\
&=(\alpha _{1}+\alpha _{2}+\alpha _{3})\Phi (a)\Phi (c)+(\alpha _{4}+\alpha _{5}+\alpha _{6})\Phi (c)\Phi (a).
\end{align}}
Similarly, by Claim \ref{c21} we obtain
{\allowdisplaybreaks\begin{align}\allowdisplaybreaks\label{ident06}
&\Phi ((\alpha _{6}+\alpha _{4}+\alpha _{5}) ac +(\alpha _{2} +\alpha _{3}+\alpha _{1}) ca)\nonumber\\
&=(\alpha _{6}+\alpha _{4}+\alpha _{5})\Phi (a)\Phi (c)+(\alpha _{2}+\alpha _{3}+\alpha _{1})\Phi (c)\Phi (a),
\end{align}}
Adding the identities (\ref{ident05}) and (\ref{ident06}) and using the Claims \ref{c210} and \ref{c211}(ii) we arrive at
{\allowdisplaybreaks\begin{align}\allowdisplaybreaks\label{ident07}
&\Phi (ac+ca)=\Phi (a)\Phi (c)+\Phi (c)\Phi (a).
\end{align}}
Thus the result follows in view of Claim \ref{c211}(iii).
\end{proof}

Now, we assume that $\mathcal{B}$ is prime and $\phi (1_{\mathcal{A}})$ is a projection of $\mathcal{B}.$ From this we can easily deduce the following result.

\begin{claim}\label{} $\Phi $ is either a $\ast $-ring isomorphism or an $\ast $-ring anti-isomorphism.
\end{claim}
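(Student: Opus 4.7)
The plan is to recognize the statement as an instance of Herstein's classical theorem: any additive Jordan ring isomorphism between prime rings of characteristic different from $2$ is either a ring isomorphism or a ring anti-isomorphism. By Claim \ref{c210}, the bijectivity of $\Phi$, and the Jordan identity (\ref{ident07}) proved in Claim \ref{c212}, the mapping $\Phi$ is an additive Jordan ring isomorphism between the prime $*$-algebras $\mathcal{A}$ and $\mathcal{B}$; Claim \ref{c211}(iii) records that $\Phi(b^{*})=\Phi(b)^{*}$, so one only needs to upgrade the Jordan product preservation to the full multiplication or to its opposite.

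The first step is a pair of standard Jordan consequences. Setting $c=a$ in (\ref{ident07}) and dividing by $2$ gives $\Phi(a^{2})=\Phi(a)^{2}$, and the Jordan identity $2aba=a\circ(a\circ b)-a^{2}\circ b$, where $x\circ y=xy+yx$, then yields $\Phi(aba)=\Phi(a)\Phi(b)\Phi(a)$ for all $a,b\in\mathcal{A}$. Linearizing this identity in $a$ produces the trilinear identity
\[
\Phi(abc+cba)=\Phi(a)\Phi(b)\Phi(c)+\Phi(c)\Phi(b)\Phi(a).
\]

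The second step is the standard Herstein-type separation. Define the multiplicative and anti-multiplicative defects
\[
D(a,b)=\Phi(ab)-\Phi(a)\Phi(b),\qquad E(a,b)=\Phi(ab)-\Phi(b)\Phi(a).
\]
By comparing the three expansions of $\Phi((ab)c+c(ab))$, $\Phi(a(bc)+(bc)a)$ and $\Phi(abc+cba)$ --- the first two obtained from (\ref{ident07}) together with additivity, and the third from the trilinear identity above --- one derives after routine rearrangement the key separating identity
\[
D(a,b)\,y\,E(a,b)=0 \qquad \text{for every } y\in\mathcal{B}.
\]
Primeness of $\mathcal{B}$ then forces, for each pair $(a,b)\in\mathcal{A}\times\mathcal{A}$, that $D(a,b)=0$ or $E(a,b)=0$.

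Finally, the sets $U=\{(a,b)\in\mathcal{A}\times\mathcal{A}:D(a,b)=0\}$ and $V=\{(a,b)\in\mathcal{A}\times\mathcal{A}:E(a,b)=0\}$ are additive subgroups of $\mathcal{A}\times\mathcal{A}$ (since $D$ and $E$ are biadditive by Claim \ref{c210}) whose union is all of $\mathcal{A}\times\mathcal{A}$. Because a group cannot be expressed as a union of two proper subgroups, one of $U$, $V$ equals the whole product, so $\Phi(ab)=\Phi(a)\Phi(b)$ for all $a,b\in\mathcal{A}$ or else $\Phi(ab)=\Phi(b)\Phi(a)$ for all $a,b\in\mathcal{A}$. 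Combined with $\Phi(b^{*})=\Phi(b)^{*}$ and the bijectivity of $\Phi$, this gives the desired dichotomy. The delicate point is the derivation of the separating identity $D(a,b)\,y\,E(a,b)=0$; once that is in hand, the primeness-driven pointwise alternative and the subgroup-union lemma are entirely routine.
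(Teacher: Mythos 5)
The paper disposes of this claim in one line, by quoting Herstein's Theorem~H from \cite{Herstein}: an additive Jordan homomorphism onto a prime ring (of characteristic not $2$) is a homomorphism or an anti-homomorphism. Your overall strategy is the same reduction --- Claims \ref{c210}, \ref{c211}(iii) and \ref{c212} make $\Phi$ an additive $\ast$-preserving Jordan isomorphism onto the prime algebra $\mathcal{B}$ --- but instead of citing Herstein you attempt to reprove his dichotomy, and the sketch has two genuine gaps.

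First, the ``key separating identity'' $D(a,b)\,y\,E(a,b)=0$ does not follow by routine rearrangement from the three expansions you list. The elements $(ab)c+c(ab)=abc+cab$, $a(bc)+(bc)a=abc+bca$ and $abc+cba$ do not combine linearly to isolate anything involving only the pair $(a,b)$; and the standard computation (apply the trilinear identity to the triple $(ab,\,x,\,ba)$ and compare with $\Phi\bigl(a(bxb)a+b(axa)b\bigr)$) yields only the \emph{symmetrized} relation $D(a,b)\Phi(x)E(a,b)+E(a,b)\Phi(x)D(a,b)=0$, together with $D(a,b)E(a,b)=E(a,b)D(a,b)=0$. Passing from these to the pointwise alternative ``$D(a,b)=0$ or $E(a,b)=0$'' in a prime ring is precisely the nontrivial content of Herstein's lemmas; it is not a one-line consequence of primeness. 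Second, your final step is incorrect as written: since $D$ and $E$ are biadditive, the sets $U$ and $V$ are \emph{not} additive subgroups of $\mathcal{A}\times\mathcal{A}$ (for $(a_1,b_1),(a_2,b_2)\in U$ one only gets $D(a_1+a_2,b_1+b_2)=D(a_1,b_2)+D(a_2,b_1)$, which need not vanish). The standard repair is the two-stage argument: for fixed $a$ the sets $\{b:D(a,b)=0\}$ and $\{b:E(a,b)=0\}$ are additive subgroups of $\mathcal{A}$ covering $\mathcal{A}$, so one of them is all of $\mathcal{A}$; then $\{a:D(a,\cdot)\equiv 0\}$ and $\{a:E(a,\cdot)\equiv 0\}$ are additive subgroups covering $\mathcal{A}$, and one concludes. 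Either fill in Herstein's lemmas honestly or, as the paper does, simply invoke \cite[Theorem H]{Herstein} after verifying its hypotheses ($\mathcal{B}$ prime, $\Phi$ additive, surjective, Jordan multiplicative, and characteristic zero).
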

\begin{proof} The result is a direct consequence of the Claim \ref{c212} and \cite[Theorem H]{Herstein}. 
\end{proof}

The second main result of this paper reads as follows.

\begin{theorem}\label{thm22}  Let $\{\alpha _{k}\}_{k=1}^{6}$ be complex numbers satisfying the condition $\sum _{k=1}^{6} \alpha _{k} \neq 0,$ $\mathcal{A}$ and $\mathcal{B}$ two unital complex $\ast $-algebras with $1_{\mathcal{A}}$ and $1_{\mathcal{B}}$ their multiplicative identities, respectively, and such that $\mathcal{A}$ is prime and has a nontrivial projection. Let $\Phi :\mathcal{A}\rightarrow \mathcal{B}$ be a bijective mapping preserving sum of triple products $\alpha _{1} ab^{*}c+\alpha _{2} acb^{*}+\alpha _{3} b^{*}ac +\alpha _{4} cab^{*}+\alpha _{5} b^{*}ca+\alpha _{6} cb^{*}a$ such that $\Phi (1_{\mathcal{A}})$ is a projection and satisfying at least one of the following conditions: (i) $\sum _{k=1,2,3} \alpha _{k}-\sum _{k=4,5,6} \alpha _{k} \neq 0$ and $\Phi ((\sum _{k=1,2,3} \alpha _{k}-\sum _{k=4,5,6} \alpha _{k})a)=(\sum _{k=1,2,3} \alpha _{k}-\sum _{k=4,5,6} \alpha _{k})\Phi (a),$ for all element $a\in \mathcal{A},$ (ii) $\sum _{k=1,3,5} \alpha _{k}-\sum _{k=2,4,6} \alpha _{k} \neq 0$ and $\Phi ((\sum _{k=1,3,5} \alpha _{k}-\sum _{k=2,4,6} \alpha _{k})a)=(\sum _{k=1,3,5} \alpha _{k}-\sum _{k=2,4,6} \alpha _{k})\Phi (a),$ for all element $a\in \mathcal{A},$ (iii) $\sum _{k=1,2,4} \alpha _{k}-\sum _{k=3,5,6} \alpha _{k} \neq 0$ and $\Phi ((\sum _{k=1,2,4} \alpha _{k}-\sum _{k=3,5,6} \alpha _{k})a)=(\sum _{k=1,2,4} \alpha _{k}-\sum _{k=3,5,6} \alpha _{k})\Phi (a),$ for all element $a\in \mathcal{A}.$

Then $\Phi $ is a $\ast $-ring isomorphism.
\end{theorem}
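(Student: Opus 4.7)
After Theorem~\ref{thm21}, its hypotheses are already in force and $\Phi(1_{\mathcal{A}})$ is assumed to be a projection, so Claims~\ref{c210}, \ref{c211} and \ref{c212} deliver everything except pure multiplicativity: $\Phi$ is additive and bijective, $\Phi(1_{\mathcal{A}})=1_{\mathcal{B}}$, $\Phi(b^{*})=\Phi(b)^{*}$, $\Phi\bigl((\sum_{k=1}^{6}\alpha_{k})a\bigr)=(\sum_{k=1}^{6}\alpha_{k})\Phi(a)$, and the Jordan identity \eqref{ident07} reads $\Phi(ac+ca)=\Phi(a)\Phi(c)+\Phi(c)\Phi(a)$. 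The plan is to upgrade this Jordan law to ordinary multiplicativity $\Phi(ac)=\Phi(a)\Phi(c)$; once in hand, together with additivity, bijectivity and $*$-preservation, $\Phi$ will be a $*$-ring isomorphism.

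The overall strategy is to extract, complementary to \eqref{ident07}, a suitable commutator identity and then average to obtain a product identity. The three conditions (i), (ii), (iii) correspond to the three natural substitutions $b=1_{\mathcal{A}}$, $a=1_{\mathcal{A}}$ and $c=1_{\mathcal{A}}$ in \eqref{fundident}; in each case the coefficient that must be cancelled is precisely the linear combination hypothesised to be nonzero and to commute with $\Phi$.

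Under (i), the substitution $b=1_{\mathcal{A}}$ in \eqref{fundident} is exactly the one carried out in Claim~\ref{c212}, producing \eqref{ident05} and, via Claim~\ref{c21}, \eqref{ident06}. Subtracting these two identities and using additivity of $\Phi$ gives $\Phi\bigl((\beta_{1}-\gamma_{1})(ac-ca)\bigr)=(\beta_{1}-\gamma_{1})\bigl(\Phi(a)\Phi(c)-\Phi(c)\Phi(a)\bigr)$ with $\beta_{1}-\gamma_{1}=\sum_{k=1,2,3}\alpha_{k}-\sum_{k=4,5,6}\alpha_{k}$. Condition (i) says precisely that this scalar is nonzero and commutes with $\Phi$, so I may cancel it and obtain $\Phi(ac-ca)=\Phi(a)\Phi(c)-\Phi(c)\Phi(a)$. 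Adding to \eqref{ident07} and dividing by $2$ (valid since $\mathcal{B}$ has characteristic zero and $\Phi(2x)=2\Phi(x)$ by additivity) yields $\Phi(ac)=\Phi(a)\Phi(c)$.

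Under (ii), I would substitute $a=1_{\mathcal{A}}$ in \eqref{fundident}; using $\Phi(1_{\mathcal{A}})=1_{\mathcal{B}}$, the identity collapses to $\Phi\bigl(\beta_{2}b^{*}c+\gamma_{2}cb^{*}\bigr)=\beta_{2}\Phi(b)^{*}\Phi(c)+\gamma_{2}\Phi(c)\Phi(b)^{*}$, where $\beta_{2}=\sum_{k=1,3,5}\alpha_{k}$ and $\gamma_{2}=\sum_{k=2,4,6}\alpha_{k}$. The Claim~\ref{c21}-swapped family gives the same identity with $\beta_{2}$ and $\gamma_{2}$ interchanged; subtracting and invoking (ii) to strip the nonzero factor $\beta_{2}-\gamma_{2}$ yields $\Phi(b^{*}c-cb^{*})=\Phi(b)^{*}\Phi(c)-\Phi(c)\Phi(b)^{*}$. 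Combining with the starred form of \eqref{ident07}, namely $\Phi(b^{*}c+cb^{*})=\Phi(b)^{*}\Phi(c)+\Phi(c)\Phi(b)^{*}$ (obtained from \eqref{ident07} by the substitution $a\mapsto b^{*}$ together with $\Phi(b^{*})=\Phi(b)^{*}$), and again dividing by $2$, produces $\Phi(b^{*}c)=\Phi(b)^{*}\Phi(c)$. Since the involution is a bijection of $\mathcal{A}$, replacing $b$ by $b^{*}$ gives $\Phi(bc)=\Phi(b)\Phi(c)$. Case (iii) is the mirror image, obtained by substituting $c=1_{\mathcal{A}}$ in \eqref{fundident}; it produces $\beta_{3}=\sum_{k=1,2,4}\alpha_{k}$ and $\gamma_{3}=\sum_{k=3,5,6}\alpha_{k}$ in front of $ab^{*}$ and $b^{*}a$, and by the same commutator/anticommutator trick yields $\Phi(ab^{*})=\Phi(a)\Phi(b)^{*}$, and hence $\Phi(ac)=\Phi(a)\Phi(c)$.

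The main conceptual point, and essentially the only place where genuine thought is required, is recognising that the three admissible linear combinations $\sum_{k=1,2,3}\alpha_{k}-\sum_{k=4,5,6}\alpha_{k}$, $\sum_{k=1,3,5}\alpha_{k}-\sum_{k=2,4,6}\alpha_{k}$ and $\sum_{k=1,2,4}\alpha_{k}-\sum_{k=3,5,6}\alpha_{k}$ appearing in (i)--(iii) are exactly the ``commutator coefficients'' emerging from the three substitutions $b=1_{\mathcal{A}}$, $a=1_{\mathcal{A}}$ and $c=1_{\mathcal{A}}$ followed by the Claim~\ref{c21} swap. Once this correspondence is spotted, the scalar-commutation hypothesis in each part is exactly what is required to strip the offending factor, and the Jordan law already in hand supplies the complementary half of the polarisation identity.
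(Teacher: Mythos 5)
Your case (i) reproduces the paper's argument exactly and is correct. There is, however, a concrete error in the mechanism you use for cases (ii) and (iii): the Claim~\ref{c21} swap does \emph{not} interchange $\beta_{2}=\alpha_{1}+\alpha_{3}+\alpha_{5}$ and $\gamma_{2}=\alpha_{2}+\alpha_{4}+\alpha_{6}$. Claim~\ref{c21} replaces the coefficient tuple $(\alpha_{1},\dots,\alpha_{6})$ by $(\alpha_{6},\alpha_{4},\alpha_{5},\alpha_{2},\alpha_{3},\alpha_{1})$, so after setting $a=1_{\mathcal{A}}$ the swapped family produces the coefficient $\alpha_{3}+\alpha_{5}+\alpha_{6}$ on $b^{*}c$ and $\alpha_{1}+\alpha_{2}+\alpha_{4}$ on $cb^{*}$ --- these are the case-(iii) sums, not $\gamma_{2}$ and $\beta_{2}$. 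Subtracting your two identities therefore leaves the factor $\alpha_{1}-\alpha_{6}$ in front of the commutator, and nothing in hypothesis (ii) allows you to cancel that scalar. As written, the step fails.

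The repair is the one the paper uses: substitute $a=1_{\mathcal{A}}$ \emph{and} $b\mapsto b^{*}$ in \eqref{fundident}, so that by Claim~\ref{c211} the identity becomes $\Phi(\beta_{2}\,bc+\gamma_{2}\,cb)=\beta_{2}\Phi(b)\Phi(c)+\gamma_{2}\Phi(c)\Phi(b)$ for all $b,c\in\mathcal{A}$; since this holds for every pair, relabelling $b\leftrightarrow c$ yields the same identity with $\beta_{2}$ and $\gamma_{2}$ interchanged, and subtraction then produces the factor $\beta_{2}-\gamma_{2}$, which hypothesis (ii) lets you strip. (Equivalently, in your starred formulation you could substitute $b\mapsto c^{*}$, $c\mapsto b^{*}$ and invoke $\Phi(x^{*})=\Phi(x)^{*}$ to get the interchange.) Case (iii) needs the analogous fix starting from $c=1_{\mathcal{A}}$. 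The rest of your argument --- polarising the commutator identity against the Jordan identity \eqref{ident07} and dividing by $2$ --- is sound and matches the paper.
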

\begin{proof} To prove the Theorem it is enough to show that $\Phi $ is a multiplicative mapping, in view of the Claims \ref{c210} and \ref{c211}(iii). First case: $\sum _{k=1,2,3} \alpha _{k}-\sum _{k=4,5,6} \alpha _{k} \neq 0.$ Subtracting (\ref{ident06}) from (\ref{ident05}), we have
{\allowdisplaybreaks\begin{align*}\allowdisplaybreaks
&\Phi ((\alpha _{1}+\alpha _{2}+\alpha _{3}-\alpha _{4}-\alpha _{5}-\alpha _{6}) ac-(\alpha _{1}+\alpha _{2}+\alpha _{3}-\alpha _{4}-\alpha _{5}-\alpha _{6}) ca)\\
&=(\alpha _{1}+\alpha _{2}+\alpha _{3}-\alpha _{4}-\alpha _{5}-\alpha _{6})\Phi (a)\Phi (c)\\
&\hspace{4.0cm}-(\alpha _{1}+\alpha _{2}+\alpha _{3}-\alpha _{4}-\alpha _{5}-\alpha _{6})\Phi (c)\Phi (a)
\end{align*}}
which leads to 
{\allowdisplaybreaks\begin{align}\allowdisplaybreaks\label{ident08}
&\Phi (ac-ca)=\Phi (a)\Phi (c)-\Phi (c)\Phi (a),
\end{align}}
in view of hypothesis. From (\ref{ident07}) and (\ref{ident08}), we arrive at $\Phi (ac)=\Phi (a)\Phi (c),$ for all elements $a,c\in \mathcal{A}.$ Second case: $\sum _{k=1,3,5} \alpha _{k}-\sum _{k=2,4,6} \alpha _{k} \neq 0.$ Replacing $a$ by $1_{\mathcal{A}}$ and $b$ by $b^{*},$ in (\ref{fundident}), we get
{\allowdisplaybreaks\begin{align*}\allowdisplaybreaks
&\Phi (\alpha _{1} 1_{\mathcal{A}}bc+\alpha _{2} 1_{\mathcal{A}}cb+\alpha _{3} b1_{\mathcal{A}}c +\alpha _{4} c1_{\mathcal{A}}b+\alpha _{5} bc1_{\mathcal{A}}+\alpha _{6} cb1_{\mathcal{A}})\nonumber \\
&=\alpha _{1}\Phi (1_{\mathcal{A}})\Phi (b)\Phi (c)+\alpha _{2}\Phi (1_{\mathcal{A}})\Phi (c)\Phi (b)+\alpha _{3}\Phi (b)\Phi (1_{\mathcal{A}})\Phi (c)\nonumber \\
&+\alpha _{4}\Phi (c)\Phi (1_{\mathcal{A}})\Phi (b)+\alpha _{5}\Phi (b)\Phi (c)\Phi (1_{\mathcal{A}})+\alpha _{6}\Phi (c)\Phi (b)\Phi (1_{\mathcal{A}}),
\end{align*}}
which leads to the identity
{\allowdisplaybreaks\begin{align*}\allowdisplaybreaks
&\Phi ((\alpha _{1}+\alpha _{3}+\alpha _{5}) bc +(\alpha _{2} +\alpha _{4}+\alpha _{6}) cb)\\
&=(\alpha _{1}+\alpha _{3}+\alpha _{5})\Phi (b)\Phi (c)+(\alpha _{2}+\alpha _{4}+\alpha _{6})\Phi (c)\Phi (b).
\end{align*}}
Now, replacing $b$ by $c$ and $c$ by $b,$ in the last identity, we get 
{\allowdisplaybreaks\begin{align*}\allowdisplaybreaks
&\Phi ((\alpha _{2} +\alpha _{4}+\alpha _{6}) bc+(\alpha _{1}+\alpha _{3}+\alpha _{5}) cb)\\
&=(\alpha _{2}+\alpha _{4}+\alpha _{6})\Phi (b)\Phi (c)+(\alpha _{1}+\alpha _{3}+\alpha _{5})\Phi (c)\Phi (b).
\end{align*}}
subtracting the last identity from the previous one, we have
{\allowdisplaybreaks\begin{align*}\allowdisplaybreaks
&\Phi ((\alpha _{1}+\alpha _{3}+\alpha _{5}-\alpha _{2}-\alpha _{4}-\alpha _{6}) bc-(\alpha _{1}+\alpha _{3}+\alpha _{5}-\alpha _{2}-\alpha _{4}-\alpha _{6}) cb)\\
&=(\alpha _{1}+\alpha _{3}+\alpha _{5}-\alpha _{2}-\alpha _{4}-\alpha _{6})\Phi (b)\Phi (c)\\
&\hspace{4.0cm} +(\alpha _{1}+\alpha _{3}+\alpha _{5}-\alpha _{2}-\alpha _{4}-\alpha _{6})\Phi (c)\Phi (b)
\end{align*}}
which implies that
{\allowdisplaybreaks\begin{align}\allowdisplaybreaks\label{ident09}
&\Phi (bc-cb)=\Phi (b)\Phi (c)-\Phi (c)\Phi (b),
\end{align}}
for all elements $b,c\in \mathcal{A}.$ The identities (\ref{ident07}) and (\ref{ident09}) show that $\Phi (bc)=\Phi (b)\Phi (c),$ for all elements $b,c\in \mathcal{A}.$ Third case: $\sum _{k=1,2,4} \alpha _{k}-\sum _{k=3,5,6} \alpha _{k} \neq 0.$ Using reasoning similar to the second case, we can conclude that $\Phi (ab)=\Phi (a)\Phi (b),$ for all elements $a,b\in \mathcal{A}.$

From what we just saw above we deduce that $\Phi $ is a multiplicative mapping.
\end{proof}

From Theorems \ref{thm21} and \ref{thm22} we can deduce the following results.

\begin{corollary} Let $\mathcal{A}$ and $\mathcal{B}$ be two unital complex $\ast $-algebras with $1_{\mathcal{A}}$ and $1_{\mathcal{B}}$ their multiplicative identities, respectively, and such that $\mathcal{A}$ is prime and has a nontrivial projection. Then every bijective mapping $\Phi :\mathcal{A}\rightarrow \mathcal{B}$ preserving triple product $a\filledsquare _{\eta }b\filledsquare _{\nu }c$ (resp., preserving mixed product $a\filledsquare _{\eta }b\circ _{\nu }c$), where $\eta $ and $\nu $ are nonzero complex numbers satifying the conditions $\overline{\eta }\neq -1$ and $\nu \neq -1$ (resp., $\eta \neq -1$ and $\nu \neq -1$), is additive. Moreover, (i) if $\Phi (1_{\mathcal{A}})$ is a projection, then $\Phi $ is a $\ast $-Jordan ring isomorphism and (ii) if $\mathcal{B}$ is prime and $\phi (1_{\mathcal{A}})$ is a projection of $\mathcal{B},$ then $\Phi $ is either a $\ast $-ring isomorphism or an $\ast $-ring anti-isomorphism.
\end{corollary}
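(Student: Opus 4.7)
The plan is to reduce both cases to Theorem \ref{thm21} by identifying the preservation of the triple (resp.\ mixed) product with preservation of a sum of triple products in the sense of (\ref{fundident}) for specific scalars $\alpha_k.$

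First I would expand the triple product using the definitions of $\filledsquare_\eta$ and $\filledsquare_\nu$: since $(a\filledsquare_\eta b)^{*}= b^{*}a+\overline{\eta}\, ab^{*},$ one has
$(a\filledsquare_\eta b)\filledsquare_\nu c = (b^{*}a+\overline{\eta}\, ab^{*})c+\nu c(b^{*}a+\overline{\eta}\, ab^{*}) = \overline{\eta}\, ab^{*}c + b^{*}ac + \nu\overline{\eta}\, cab^{*} + \nu\, cb^{*}a.$
Matching this term-by-term against the left-hand side of (\ref{fundident}) gives $\alpha_1=\overline{\eta},\ \alpha_3=1,\ \alpha_4=\nu\overline{\eta},\ \alpha_6=\nu,$ and $\alpha_2=\alpha_5=0,$ so that $\sum_{k=1}^{6}\alpha_k=(1+\overline{\eta})(1+\nu).$ The hypotheses $\overline{\eta}\neq -1$ and $\nu\neq -1$ make this sum nonzero, and a parallel verification on the right-hand side of (\ref{fundident}) shows that the preservation identity for $a\filledsquare_{\eta}b\filledsquare_{\nu}c$ is exactly the preservation identity for the sum of triple products with these $\alpha_k.$ Theorem \ref{thm21} then yields additivity, the $\ast$-Jordan ring isomorphism conclusion in part (i), and the $\ast$-ring (anti-)isomorphism dichotomy in part (ii).

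For the mixed product I would proceed analogously, expanding
$a\filledsquare_\eta b \circ_\nu c = (a^{*}b+\eta\, ba^{*})c + \nu c(a^{*}b+\eta\, ba^{*}) = a^{*}bc + \eta\, ba^{*}c + \nu\, ca^{*}b + \nu\eta\, cba^{*}.$
Because in (\ref{fundident}) the adjoint is carried by the middle argument, I would apply the preservation hypothesis after swapping the roles of $a$ and $b$ (allowed since the identity holds for all triples). This puts the identity in the form (\ref{fundident}) with $\alpha_1=\eta,\ \alpha_3=1,\ \alpha_4=\nu\eta,\ \alpha_6=\nu,$ and $\alpha_2=\alpha_5=0,$ so $\sum_{k=1}^{6}\alpha_k=(1+\eta)(1+\nu)\neq 0$ under the assumed conditions $\eta\neq -1$ and $\nu\neq -1.$ A second application of Theorem \ref{thm21} delivers the same three conclusions.

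I do not foresee any genuine obstacle: the proof is essentially a calculation of adjoints followed by a bookkeeping of coefficients, together with one harmless relabeling in the mixed-product case. The only subtlety worth flagging is that taking the outer adjoint in the triple-product case introduces the conjugate $\overline{\eta},$ whereas the mixed-product expansion keeps $\eta$ unconjugated; this is precisely why the two cases are stated with the nonvanishing conditions $\overline{\eta}\neq -1$ and $\eta\neq -1$ respectively, each being exactly what ensures $\sum_{k=1}^{6}\alpha_k\neq 0$ and hence the applicability of Theorem \ref{thm21}.
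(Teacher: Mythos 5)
Your proposal is correct and is exactly the reduction the paper intends (the paper states the corollary follows "from Theorems \ref{thm21} and \ref{thm22}" without writing out the details): expanding $a\filledsquare _{\eta }b\filledsquare _{\nu }c$ and $a\filledsquare _{\eta }b\circ _{\nu }c$ identifies them with the sum of triple products for $\alpha _{1}=\overline{\eta },\ \alpha _{3}=1,\ \alpha _{4}=\nu \overline{\eta },\ \alpha _{6}=\nu $ (resp.\ $\alpha _{1}=\eta ,\ \alpha _{3}=1,\ \alpha _{4}=\nu \eta ,\ \alpha _{6}=\nu $ after the harmless relabeling of $a$ and $b$), with $\sum _{k=1}^{6}\alpha _{k}=(1+\overline{\eta })(1+\nu )$ (resp.\ $(1+\eta )(1+\nu )$) nonzero under the stated conditions, so Theorem \ref{thm21} applies. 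Your coefficient bookkeeping and the observation about where the conjugate $\overline{\eta }$ arises are both accurate.
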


\begin{corollary} Let $\mathcal{A}$ and $\mathcal{B}$ be two unital complex $\ast $-algebras with $1_{\mathcal{A}}$ and $1_{\mathcal{B}}$ their multiplicative identities, respectively, and such that $\mathcal{A}$ is prime and has a nontrivial projection. Let $\Phi :\mathcal{A}\rightarrow \mathcal{B}$ be a bijective mapping preserving triple product $a\filledsquare _{\eta }b\filledsquare _{\nu }c$ (resp., preserving mixed product $a\filledsquare _{\eta }b\circ _{\nu }c$), where $\eta $ and $\nu $ are nonzero complex numbers, such that $\Phi (1_{\mathcal{A}})$ is a projection and satisfying at least one of the following conditions: (i) ($\overline{\eta }\neq -1$ and $\nu \neq \pm 1$) and $\Phi ((\overline{\eta }+1)(1-\nu )a)=(\overline{\eta }+1)(1-\nu )\Phi (a),$ for all element $a\in \mathcal{A},$ or (ii) ($\overline{\eta }\neq \pm 1$ and $\nu \neq -1$) and $\Phi ((\overline{\eta }-1)(1+\nu )a)=(\overline{\eta }-1)(1+\nu )\Phi (a),$ for all element $a\in \mathcal{A}$ (resp., (i) ($\eta \neq -1$ and $|\nu |\neq 1$) and $\Phi ((\eta +1)(1-\nu )a)=(\eta +1)(1-\nu )\Phi (a),$ for all element $a\in \mathcal{A},$ or (ii) ($|\eta |\neq 1$ and $\nu \neq -1$) and $\Phi ((\eta -1)(1+\nu )a)=(\eta -1)(1+\nu )\Phi (a),$ for all element $a\in \mathcal{A}$). Then, $\Phi $ is a $\ast $-ring isomorphism. 

In particular, If (i) ($\overline{\eta }\neq -1$ and $\nu \neq \pm 1$) and $(\overline{\eta }+1)(1-\nu )$ is a rational number or (ii) ($\overline{\eta }\neq \pm 1$ and $\nu \neq -1$) and $(\overline{\eta }-1)(1+\nu )$ is a rational number (resp., (i) ($\eta \neq -1$ and $|\nu |\neq 1$) and $(\eta +1)(1-\nu )$ is a rational number or (ii) ($|\eta |\neq 1$ and $\nu \neq -1$) and $(\eta -1)(1+\nu )$ is a rational number), then $\Phi $ is a $\ast $-ring isomorphism.
\end{corollary}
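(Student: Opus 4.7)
The plan is to derive both the triple-product and the mixed-product assertions from Theorem~\ref{thm22} by expanding the respective products into the canonical six-term form of (\ref{fundident}) and reading off the scalars $\alpha_k$; the ``In particular'' clause is then a rationality remark built on top of Theorem~\ref{thm21}.

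First I would handle the triple product. A direct expansion using $x\filledsquare_\eta y = x^*y + \eta y x^*$ gives
\[
a\filledsquare_\eta b\filledsquare_\nu c = (a^*b+\eta ba^*)^*c + \nu c(a^*b+\eta ba^*)^* = \overline{\eta}\,ab^*c + b^*ac + \nu\overline{\eta}\,cab^* + \nu\,cb^*a,
\]
identifying $\alpha_1=\overline{\eta}$, $\alpha_3=1$, $\alpha_4=\nu\overline{\eta}$, $\alpha_6=\nu$, and $\alpha_2=\alpha_5=0$. The four sums appearing in Theorems~\ref{thm21} and~\ref{thm22} then factor cleanly as
\[
\sum_{k=1}^{6}\alpha_k = (\overline{\eta}+1)(1+\nu),
\]
\[
\sum_{k=1,2,3}\alpha_k - \sum_{k=4,5,6}\alpha_k \;=\; \sum_{k=1,3,5}\alpha_k - \sum_{k=2,4,6}\alpha_k \;=\; (\overline{\eta}+1)(1-\nu),
\]
\[
\sum_{k=1,2,4}\alpha_k - \sum_{k=3,5,6}\alpha_k = (\overline{\eta}-1)(1+\nu).
\]
Under hypothesis (i) of the corollary I would verify $\sum\alpha_k\neq 0$ and $(\overline{\eta}+1)(1-\nu)\neq 0$ and invoke Theorem~\ref{thm22}(i) (equivalently (ii)); under hypothesis (ii) the nonzero quantity is instead $(\overline{\eta}-1)(1+\nu)$, so Theorem~\ref{thm22}(iii) applies. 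In either case $\Phi$ is a $\ast$-ring isomorphism.

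For the mixed product, the analogous expansion
\[
a\filledsquare_\eta b\circ_\nu c = a^*bc + \eta ba^*c + \nu ca^*b + \nu\eta cba^*
\]
becomes, after renaming variables so that the $^*$-conjugated slot plays the role of $b$ in (\ref{fundident}), a canonical six-term expression with $\alpha_1=\eta$, $\alpha_3=1$, $\alpha_4=\nu\eta$, $\alpha_6=\nu$ (other $\alpha_k=0$). The three relevant sums factor exactly as above but with $\overline{\eta}$ replaced by $\eta$, and the analysis proceeds identically to yield a $\ast$-ring isomorphism under either hypothesis (i) or (ii).

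Finally, for the ``In particular'' clause I would invoke Theorem~\ref{thm21} to obtain additivity of $\Phi$ (note that $\sum\alpha_k\neq 0$ holds under both hypotheses) and then upgrade additivity to $\mathbb{Q}$-linearity in the standard way: $\Phi(na)=n\Phi(a)$ for $n\in\mathbb{Z}$ by additivity, and substituting $a/n$ for $a$ yields $\Phi(a/n)=\Phi(a)/n$. Consequently, if the relevant scalar $(\overline{\eta}\pm 1)(1\mp\nu)$ (or its mixed-product analogue) is rational, the scalar-equivariance hypothesis required for the first part is automatic, and the conclusion is immediate. The only delicate bookkeeping point is keeping track of the $^*$-conjugation in the first slot when $\filledsquare_\nu$ is applied externally to $a\filledsquare_\eta b$ (which is what produces $\overline{\eta}$ rather than $\eta$ in the triple-product case); beyond this sign/conjugation accounting, the argument is entirely routine.
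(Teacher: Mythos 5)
Your proposal is correct and is exactly the deduction the paper intends (the authors state the corollary follows "from Theorems \ref{thm21} and \ref{thm22}" without writing out the details): you expand $a\filledsquare _{\eta }b\filledsquare _{\nu }c$ and $a\filledsquare _{\eta }b\circ _{\nu }c$ into the six-term form of (\ref{fundident}), read off $\alpha _{1}=\overline{\eta }$ (resp.\ $\eta $), $\alpha _{3}=1$, $\alpha _{4}=\nu \overline{\eta }$ (resp.\ $\nu \eta $), $\alpha _{6}=\nu $, verify the factorizations $(\overline{\eta }+1)(1+\nu )$, $(\overline{\eta }+1)(1-\nu )$, $(\overline{\eta }-1)(1+\nu )$ of the relevant sums, and apply Theorem \ref{thm22}; the $\mathbb{Q}$-homogeneity argument for the ``In particular'' clause is the standard consequence of the additivity furnished by Theorem \ref{thm21}. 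All coefficient computations and hypothesis checks in your write-up are accurate, so nothing is missing.
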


\end{document}